\documentclass[12pt]{article}
\usepackage{amsmath,amssymb,amsthm}
\usepackage{mathrsfs}
\usepackage{url}
\usepackage{hyperref}
\usepackage[capitalise]{cleveref}
\usepackage[margin=3cm]{geometry}
\usepackage{tikz}
\usetikzlibrary{positioning,arrows.meta}
\usetikzlibrary{positioning,shapes.geometric}
\usepackage{verbatim}
\usepackage{enumerate}
\usepackage{subcaption}

\newtheorem{theorem}{Theorem}[section]
\newtheorem{lemma}[theorem]{Lemma}
\newtheorem{corollary}[theorem]{Corollary}

\DeclareMathOperator{\ex}{ex}

\newcommand{\cC}{\mathcal{C}}

\newcommand{\eps}{\varepsilon}

\newcommand{\DT}{\operatorname{DT}}
\newcommand{\Tu}{\operatorname{Tu}}

\title{The typical structure of oriented graphs and digraphs \\
       with forbidden blow-up of transitive tournaments}

\author{Meili Liang, Yue Guan, Ruiling Zheng, Jianxi Liu\thanks{School of Mathematics and Statistics, Guangdong University of Foreign Studies, Guangzhou, China.
Email: jxliu@gdufs.edu.cn. Corresponding author.}}

\begin{document}

\maketitle

\begin{abstract}
For integers $r\ge 2$, $t\ge 1$ and a real number $a\in(3/2,2]$, we study the typical structure of oriented graphs and digraphs that do not contain a blow-up $T_{r+1}^t$ of a transitive tournament. 
We prove that almost every $T_{r+1}^t$-free oriented graph on $n$ vertices admits an $r$-partition $V_1\cup\cdots\cup V_r$ such that each induced subgraph $G[V_i]$ is $T_2^t$-free, and the same holds for almost every $T_{r+1}^t$-free digraph. 
Consequently, the number $f(n,T_{r+1}^t)$ of labelled $T_{r+1}^t$-free oriented graphs satisfies $f(n,T_{r+1}^t)=|\mathcal{P}_{n,r,t}|(1+o(1))$, where $\mathcal{P}_{n,r,t}$ is the family of oriented graphs admitting such an $r$-partition with each part $T_2^t$-free; an analogous statement holds for digraphs. 
When $t=1$ this recovers the result of K\"uhn, Osthus, Townsend and Zhao (2017) that almost all $T_{r+1}$-free oriented graphs (resp. digraphs) are $r$-partite, thereby confirming a generalised form of Cherlin's conjecture.
Our proof combines the hypergraph container method, a weighted Erd\H{o}s–Stone theorem, and a stability analysis for near-extremal $T_{r+1}^t$-free digraphs.
\end{abstract}

\textbf{Keywords:} Oriented graphs, Digraphs, Forbidden subgraphs, Transitive tournaments, Blow-up

\textbf{MSC2020:} 05C20, 05C35, 05C38, 05C65

\section{Introduction}

Given a fixed (di)graph $H$, a (di)graph is called \emph{$H$-free} if it contains no copy of $H$. Extremal graph theory addresses two fundamental questions: what is the maximum number of edges in an $H$-free graph on $n$ vertices, and what does a typical $H$-free graph look like? For undirected graphs, Erdős, Kleitman and Rothschild \cite{erdos1976asymptotic} initiated the study of the second question by proving that almost all triangle‑free graphs are bipartite, and Kolaitis, Prömel and Rothschild \cite{kolaitis1987k} extended this to $K_{r+1}$-free graphs, showing that almost all are $r$-partite.

For directed graphs (digraphs) and oriented graphs, the situation is far more complex. A digraph is a set of vertices together with a set of ordered pairs of distinct vertices; an oriented graph is a digraph with at most one arc in each direction. The transitive tournament $T_k$ is the orientation of a complete graph that is transitive (see Figure 1 for examples). In his classification of countable homogeneous oriented graphs, Cherlin \cite{cherlin1998classification} made several conjectures, among them that almost all $T_3$-free oriented graphs are tripartite. This was proved by Kühn, Osthus, Townsend and Zhao \cite{kuhn2017structure} (KOTZ), who also showed that for any $k\ge 2$, almost all $T_{k+1}$-free oriented graphs and digraphs are $k$-partite.

A natural generalisation is to forbid a \emph{blow-up} of a transitive tournament. For integers $r,t\ge 1$, let $T_{r+1}^t$ denote the digraph obtained from $T_{r+1}$ by replacing each vertex with an independent set of size $t$ and adding all possible arcs from the $i$-th part to the $j$-th part whenever $ij$ is an arc of $T_{r+1}$  (see Figure 1 for examples). Thus $T_{r+1}^t$ is an oriented graph (no $2$-cycles). Clearly $T_{r+1}^t$ contains $T_{r+1}$ (take one vertex from each part), so any $T_{r+1}$-free digraph is also $T_{r+1}^t$-free, but the converse is false.

\begin{figure}[ht]
\centering

% 第一行：T2, T3, T4
\begin{subfigure}{0.3\textwidth}
\centering
\begin{tikzpicture}[
    vertex/.style={circle, draw, fill=black, minimum size=5pt, inner sep=0pt},
    arc/.style={-{Stealth[length=3pt, width=2pt]}, thick}
]
\node[vertex, label=above:$1$] (v1) at (0,0) {};
\node[vertex, label=above:$2$] (v2) at (1.5,0) {};
\draw[arc] (v1) -- (v2);
\end{tikzpicture}
\caption{$T_2$}
\label{fig:T2}
\end{subfigure}
\hfill
\begin{subfigure}{0.3\textwidth}
\centering
\begin{tikzpicture}[
    vertex/.style={circle, draw, fill=black, minimum size=5pt, inner sep=0pt},
    arc/.style={-{Stealth[length=3pt, width=2pt]}, thick}
]
\node[vertex, label=above:$1$] (v1) at (0,0) {};
\node[vertex, label=above:$2$] (v2) at (1.5,1.5) {};
\node[vertex, label=above:$3$] (v3) at (3,0) {};
\draw[arc] (v1) -- (v2);
\draw[arc] (v1) -- (v3);
\draw[arc] (v2) -- (v3);
\end{tikzpicture}
\caption{$T_3$}
\label{fig:T3}
\end{subfigure}
\hfill
\begin{subfigure}{0.3\textwidth}
\centering
\begin{tikzpicture}[
    vertex/.style={circle, draw, fill=black, minimum size=5pt, inner sep=0pt},
    arc/.style={-{Stealth[length=3pt, width=2pt]}, thick}
]
\node[vertex, label=above:$1$] (u1) at (0,0) {};
\node[vertex, label=above:$2$] (u2) at (1.5,1.2) {};
\node[vertex, label=above:$3$] (u3) at (3,0) {};
\node[vertex, label=right:$4$] (u4) at (1.5,-1.2) {};
\draw[arc] (u1) -- (u2);
\draw[arc] (u1) -- (u3);
\draw[arc] (u1) -- (u4);
\draw[arc] (u2) -- (u3);
\draw[arc] (u2) -- (u4);
\draw[arc] (u3) -- (u4);
\end{tikzpicture}
\caption{$T_4$}
\label{fig:T4}
\end{subfigure}

\vspace{0.5cm}

% 第二行：T2^2, T3^2, T4^2
\begin{subfigure}{0.3\textwidth}
\centering
\begin{tikzpicture}[
    vertex/.style={circle, draw, fill=black, minimum size=4pt, inner sep=0pt},
    cluster/.style={ellipse, draw, dashed, fill=gray!15, inner sep=2pt},
    arc/.style={-{Stealth[length=3pt, width=2pt]}, thick}
]
% T2^2: two clusters, left and right, each with two vertices (vertical pairs)
\draw[cluster] (0,0) ellipse (0.5 and 0.9);
\node[vertex] (a1) at (0,-0.35) {};
\node[vertex] (a2) at (0,0.35) {};
\node at (0,-1.0) {$V_1$};

\draw[cluster] (2.5,0) ellipse (0.5 and 0.9);
\node[vertex] (b1) at (2.5,-0.35) {};
\node[vertex] (b2) at (2.5,0.35) {};
\node at (2.5,-1.0) {$V_2$};
% All arcs from V1 to V2
\foreach \x in {a1,a2} {
  \foreach \y in {b1,b2} {
    \draw[arc] (\x) -- (\y);
}
}
\end{tikzpicture}
\caption{$T_2^2$ (blow-up with $t=2$)}
\label{fig:T2sq}
\end{subfigure}
\hfill
\begin{subfigure}{0.3\textwidth}
\centering
\begin{tikzpicture}[
    vertex/.style={circle, draw, fill=black, minimum size=4pt, inner sep=0pt},
    cluster/.style={ellipse, draw, dashed, fill=gray!15, inner sep=2pt},
    arc/.style={-{Stealth[length=3pt, width=2pt]}, thick}
]
% T3^2: positions: left (0,0) vertical, top (1.5,1.5) horizontal, right (3,0) vertical
\draw[cluster] (0,0) ellipse (0.5 and 0.9);
\node[vertex] (a1) at (0,-0.35) {};
\node[vertex] (a2) at (0,0.35) {};
\node at (0,-1.0) {$V_1$};

\draw[cluster] (1.5,1.5) ellipse (0.9 and 0.5);
\node[vertex] (b1) at (1.5-0.35,1.5) {};
\node[vertex] (b2) at (1.5+0.35,1.5) {};
\node at (1.5,2.1) {$V_2$};

\draw[cluster] (3,0) ellipse (0.5 and 0.9);
\node[vertex] (c1) at (3,-0.35) {};
\node[vertex] (c2) at (3,0.35) {};
\node at (3,-1.0) {$V_3$};
\foreach \x in {a1,a2} {
  \foreach \y in {b1,b2} { \draw[arc] (\x) -- (\y);
}
  \foreach \y in {c1,c2} { \draw[arc] (\x) -- (\y);
}
}
\foreach \x in {b1,b2} {
  \foreach \y in {c1,c2} { \draw[arc] (\x) -- (\y);
}
}
\end{tikzpicture}
\caption{$T_3^2$}
\label{fig:T3sq}
\end{subfigure}
\hfill
\begin{subfigure}{0.3\textwidth}
\centering
\begin{tikzpicture}[
    vertex/.style={circle, draw, fill=black, minimum size=4pt, inner sep=0pt},
    cluster/.style={ellipse, draw, dashed, fill=gray!15, inner sep=2pt},
    arc/.style={-{Stealth[length=3pt, width=2pt]}, thick}
]
% T4^2: left vertical, top horizontal, right vertical, bottom horizontal
\draw[cluster] (0,0) ellipse (0.5 and 0.9);
\node[vertex] (a1) at (0,-0.35) {};
\node[vertex] (a2) at (0,0.35) {};
\node at (0,-1.0) {$V_1$};

\draw[cluster] (1.5,1.5) ellipse (0.9 and 0.5);
\node[vertex] (b1) at (1.5-0.35,1.5) {};
\node[vertex] (b2) at (1.5+0.35,1.5) {};
\node at (1.5,2.1) {$V_2$};

\draw[cluster] (3,0) ellipse (0.5 and 0.9);
\node[vertex] (c1) at (3,-0.35) {};
\node[vertex] (c2) at (3,0.35) {};
\node at (3,-1.0) {$V_3$};

\draw[cluster] (1.5,-1.5) ellipse (0.9 and 0.5);
\node[vertex] (d1) at (1.5-0.35,-1.5) {};
\node[vertex] (d2) at (1.5+0.35,-1.5) {};
\node at (1.5,-2.1) {$V_4$};
\foreach \x in {a1,a2} {
  \foreach \y in {b1,b2,c1,c2,d1,d2} { \draw[arc] (\x) -- (\y);
}
}
\foreach \x in {b1,b2} {
  \foreach \y in {c1,c2,d1,d2} { \draw[arc] (\x) -- (\y);
}
}
\foreach \x in {c1,c2} {
  \foreach \y in {d1,d2} { \draw[arc] (\x) -- (\y);
}
}
\end{tikzpicture}
\caption{$T_4^2$}
\label{fig:T4sq}
\end{subfigure}

\caption{Transitive tournaments $T_2$, $T_3$, $T_4$ and their blow-ups $T_2^2$, $T_3^2$, $T_4^2$ (each vertex replaced by an independent set of size $2$, all arcs from earlier parts to later parts).
The dashed ellipses indicate the clusters; vertices are arranged vertically or horizontally for visual clarity.}
\label{fig:blowups}
\end{figure}

In this paper we determine the typical structure of $T_{r+1}^t$-free oriented graphs and digraphs. Perhaps surprisingly, the presence of the blow-up does not force the graph to be strictly $r$-partite when $t\ge 2$; rather, it allows a limited number of internal edges, provided those edges do not themselves contain a copy of $T_2^t$. Our main result is the following.

\begin{theorem}[Main Theorem]\label{thm:main}
Let $r\ge 2$, $t\ge 1$ be integers and $a\in(3/2,2]$. 
\begin{enumerate}
\item[(i)] Almost every $T_{r+1}^t$-free oriented graph on $n$ vertices admits an $r$-partition\\
$V_1\cup\cdots\cup V_r$ such that each induced subgraph $G[V_i]$ is $T_2^t$-free.
\item[(ii)] Almost every $T_{r+1}^t$-free digraph on $n$ vertices admits an $r$-partition\\
 $V_1\cup\cdots\cup V_r$ such that each induced subgraph $G[V_i]$ is $T_2^t$-free.
\end{enumerate}
Moreover, the number of such graphs satisfies $f(n,T_{r+1}^t) = |\mathcal{P}_{n,r,t}|(1+o(1))$, where $\mathcal{P}_{n,r,t}$ is the family of oriented graphs on $[n]$ that admit an $r$-partition with each part $T_2^t$-free.
\end{theorem}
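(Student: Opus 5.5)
The plan follows the Kühn--Osthus--Townsend--Zhao strategy, with "$r$-partite" replaced by "$r$-partite up to $T_2^t$-free parts" throughout. We treat digraphs with the weighted size $e_a(G)=a\cdot(\#\text{double arcs})+(\#\text{single arcs})$. The oriented case is the analogue with $\log_2 3$ in place of $a$: each pair of vertices is a non-edge or one of two orientations. Here $a\in(3/2,2]$ is exactly the range in which a double arc is worth more than a single arc but less than two single arcs. The extremal weighted size of a $T_{r+1}^t$-free digraph is then $a\,t_r(n)+o(n^2)$, where $t_r(n)$ is the Turán number. This is the weighted Erdős--Stone statement, proved by applying the undirected Erdős--Stone--Simonovits theorem to the graph of double arcs, together with a regularity/supersaturation argument showing that single arcs inside a dense blow-up can be oriented to create $T_{r+1}^t$. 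The lower bound is easy: every member of $\mathcal{P}_{n,r,t}$ is $T_{r+1}^t$-free, since any copy of $T_{r+1}^t$ has two of its $r+1$ classes meeting a common $V_i$ and hence contains a $T_2^t$ there. So $f(n,T_{r+1}^t)\ge|\mathcal{P}_{n,r,t}|\ge 3^{t_r(n)}$ (respectively $4^{t_r(n)}$ for digraphs). It therefore suffices to show that the number of $T_{r+1}^t$-free graphs \emph{not} in $\mathcal{P}_{n,r,t}$ is $o(|\mathcal{P}_{n,r,t}|)$.

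\textbf{Step 1 (containers and rough structure).} Apply the hypergraph container method to the $(r+1)t$-uniform hypergraph whose edges are copies of $T_{r+1}^t$, using the supersaturation version of the weighted Erdős--Stone theorem. This yields $2^{o(n^2)}$ containers, each with $o(n^{r+1})$ (even $o(n^{(r+1)t})$) copies of $T_{r+1}^t$ and weighted size at most $a\,t_r(n)+o(n^2)$. Every container of near-maximal weight is, by a stability theorem for near-extremal $T_{r+1}^t$-free digraphs, $o(n^2)$-close to the complete balanced $r$-partite digraph with double arcs between parts. It follows that all but $o(3^{t_r(n)})$ of the $T_{r+1}^t$-free graphs $G$ admit a partition $V_1\cup\dots\cup V_r$ with $|V_i|=n/r+o(n)$ and only $o(n^2)$ internal edges. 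We choose the partition to minimise the number of internal edges.

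\textbf{Step 2 (cleaning).} Call a vertex \emph{bad} if it has $\Omega(n)$ neighbours in its own part, or if it misses $\Omega(n)$ arcs to some other part. A standard counting argument shows that graphs with at least one bad vertex are exponentially rarer than $3^{t_r(n)}$ per bad vertex. The reason is that each bad vertex forgoes a factor $3^{\Omega(n)}$ in choices, while the $\binom{n}{\cdot}$ cost of locating the bad vertices is only $2^{O(n\log n)}$ per vertex and is absorbed. After removing this rare set, every vertex $v\in V_i$ sends and receives double arcs to all but $o(n)$ vertices of each $V_j$, $j\ne i$. Minimality of the partition also guarantees that $v$ has few neighbours in $V_i$.

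\textbf{Step 3 (exact structure).} This is the main obstacle. We must show that, in the remaining graphs, some $G[V_i]$ containing a $T_2^t$ forces a copy of $T_{r+1}^t$. Suppose $G[V_1]$ contains a $T_2^t$ on sets $A\to B$. Every vertex of $A\cup B$ has almost-complete double-arc neighbourhoods in each $V_j$, $j\ge 2$. Hence the common neighbourhood of the $2t$ vertices has size $n/r-o(n)$ in every $V_j$. If inside these common neighbourhoods we can find $t$-sets $C_2,\dots,C_r$ forming a complete ``blow-up'' whose arcs run in all required directions, we obtain $T_{r+1}^t$ with classes $A,B,C_2,\dots,C_r$. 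Because almost all arcs between parts are double, this exists unless the cross-part graphs on these common neighbourhoods are atypically sparse in double arcs. To make this rigorous we count rather than argue deterministically. For a fixed internal $T_2^t$, the number of cross-part configurations avoiding the required $K_{t,\dots,t}$ of double arcs among the common neighbourhoods is at most $3^{t_r(n)-\Omega(n^2)}$ (via Kővári--Sós--Turán/supersaturation). Summing over the $n^{2t}$ choices of $A,B$ and over the internal edge sets, which number at most $2^{O(n^{2-1/t})}$ since they are $o(n^2)$ and near-$T_2^t$-free, still gives $o(3^{t_r(n)})$. The delicate point is making the loss $\Omega(n^2)$ beat the $2^{o(n^2)}$ internal-edge entropy and the container error simultaneously; we would organise this as in KOTZ, by first fixing the partition and internal edges and then bounding the cross edges.

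\textbf{Step 4 (conclusion).} Combining Steps 1--3, the number of $T_{r+1}^t$-free graphs outside $\mathcal{P}_{n,r,t}$ is $o(3^{t_r(n)})=o(|\mathcal{P}_{n,r,t}|)$. This gives (i) and (ii) and $f(n,T_{r+1}^t)=|\mathcal{P}_{n,r,t}|(1+o(1))$. The digraph case runs identically with weight $a$ and base $4$. Here the condition $a>3/2$ is used in Step 2 to show that single arcs between parts are rare, and the condition $a\le 2$ is used to show that internal double arcs cannot compensate for them.
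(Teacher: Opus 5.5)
Your Step 1 matches the paper's rough-structure lemma (containers, removal, stability). The rest of the outline has genuine gaps, beginning with the lower bound.

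\textbf{The lower bound is false.} You claim every member of $\mathcal{P}_{n,r,t}$ is $T_{r+1}^t$-free because two of the $r+1$ classes meet a common part. For $t\ge2$ a class can be split between parts, so two classes meeting $V_i$ need not put a $T_2^t$ inside $V_i$. For $r=t=2$, let $V_1=\{a,a',x\}$ with $a\to x$, $a'\to x$, and let $V_2=\{y,b,b'\}$ with $y\to b$, $y\to b'$. Add the cross arcs from $a,a'$ to $y,b,b'$ and from $x$ to $b,b'$. Both parts have three vertices, so both are $T_2^2$-free, yet the classes $\{a,a'\},\{x,y\},\{b,b'\}$ span a $T_3^2$. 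Hence $f(n,T_{r+1}^t)\ge|\mathcal{P}_{n,r,t}|$ is unjustified, and the ``moreover'' clause does not follow from bounding the free graphs outside $\mathcal{P}_{n,r,t}$. The paper makes the same claim for members of $\mathcal{P}_{n,r,t}$ whose cross edges all go forward. This example refutes that too, since all its cross arcs go from $V_1$ to $V_2$. A typical member of $\mathcal{P}_{n,2,2}$ has many in- and out-stars in both parts and random cross pairs, so one expects most of $\mathcal{P}_{n,2,2}$ to contain $T_3^2$. The asymptotic $f=|\mathcal{P}_{n,r,t}|(1+o(1))$ therefore looks doubtful for $t\ge2$.

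\textbf{Steps 2 and 3 transfer the container's structure to the graph.} Stability makes the container $C$ close to $\DT_r(n)$. A typical $G\subseteq C$, however, spreads its cross pairs roughly uniformly over the $3$ (resp.\ $4$) possible states. An oriented graph has no double arcs at all. A typical digraph has double arcs on only about a quarter of its cross pairs and single arcs on about half. Consequently:
\begin{enumerate}
\item with your definition, almost every vertex is bad;
\item the claim that each vertex sends and receives double arcs to all but $o(n)$ vertices of the other parts fails for almost all graphs;
\item the relevant common neighbourhood of $A\cup B$ in $V_j$ has size about $3^{-2t}|V_j|$, not $n/r-o(n)$;
\item your closing remark that $a>3/2$ makes single cross arcs rare is wrong.
\end{enumerate}
Any extension of an internal $T_2^t$ must come from the randomness of the cross orientations.

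\textbf{The count in Step 3 is false, and this is decisive.} Fix $A,B$. Every configuration in which the $2t(n-|V_1|)$ cross pairs at $A\cup B$ are non-edges avoids your structure, so the saving is only $3^{-O(n)}$, not $3^{-\Omega(n^2)}$.

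Worse, the sum you want to be $o(3^{t_r(n)})$ is huge. Let $V_1$ consist of a $T_2^t$ on $A\cup B$ together with an arbitrary $C_4$-free oriented graph on the remaining vertices of $V_1$, with no arcs between them. Let $V_2,\dots,V_r$ be independent. For each $v\in V_2$, forbid only the three patterns $v\to A\cup B$, $A\to v\to B$ and $A\cup B\to v$, and leave all other cross pairs free. Each independent part meets at most one class of a copy of $T_{r+1}^t$, and the only $T_2^t$ in $G[V_1]$ is $A\to B$. So two classes would have to be $A,B$ and another would have to lie inside $V_2$, which the forbidden patterns exclude. These graphs are therefore $T_{r+1}^t$-free, and there are $3^{t_r(n)+\Omega(n^{3/2})}$ of them, each with an internal $T_2^t$.

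Dropping the $T_2^t$ gives $f(n,T_{r+1}^t)\ge 3^{t_r(n)+\Omega(n^{3/2})}$ for $t\ge2$, so normalising by $3^{t_r(n)}$ is hopeless. You must bound bad graphs relative to good ones, paying only a polynomial factor against a $2^{-\Theta(n)}$ saving. That is what the paper's map $\Phi$ is for: it deletes the arcs inside the first $T_2^t$, is at most $n^{2t}$-to-one, and compares the numbers of admissible cross configurations inside a container. Your outline has no such mechanism.

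The paper's execution of this step is itself loose. Its $Y_j$ are copies of $T_{r-1}^t$ in $C$ rather than in $G$. Its per-container bound is also summed over possibly superexponentially many overlapping containers, which the factor $e^{-\gamma n}$ does not absorb.
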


When $t=1$, $T_2^1$ is simply a single edge, so $T_2^1$-free means “no edges inside a part”, i.e. the part is independent. Thus the theorem recovers the KOTZ result that almost all $T_{r+1}$-free graphs are strictly $r$-partite. For $t\ge 2$, the conclusion is weaker but optimal: there exist many $T_{r+1}^t$-free graphs that are not $r$-partite, and the theorem exactly describes the obstruction.

\section{Preliminaries}
\subsection{Notation}
For a digraph $G=(V,E)$, let $f_1(G)$ be the number of unordered pairs $\{u,v\}$ such that exactly one of $uv$ and $vu$ belongs to $E$, and let $f_2(G)$ be the number of unordered pairs with both $uv$ and $vu$ present.
For a real number $a\ge1$, define the \emph{weighted size}
\[
e_a(G):=a\cdot f_2(G)+f_1(G).
\]
This unifies the treatment of oriented graphs ($a=\log 3$, since each 2‑cycle contributes 3 ways to orient) and digraphs ($a=2$, total number of arcs). Let $\ex_a(n,H)$ denote the maximum $e_a(G)$ over all $H$-free digraphs on $n$ vertices. Let $\Tu_r(n)$ be the $r$-partite Turán graph on $n$ vertices (parts as equal as possible), and let $t_r(n)=e(\Tu_r(n))$. Denote by $\DT_r(n)$ the digraph obtained from $\Tu_r(n)$ by replacing each undirected edge with a pair of opposite arcs. Clearly $\DT_r(n)$ is $r$-partite and $T_{r+1}^t$-free, so $\ex_a(n,T_{r+1}^t)\ge a\,t_r(n)$.

We say that \emph{almost all} graphs in a family $\mathcal{F}$ have property $\mathcal{P}$ if
\[
\lim_{n\to\infty}\frac{|\{G\in\mathcal{F}_n: G\text{ has property }\mathcal{P}\}|}{|\mathcal{F}_n|}=1.
\]

\subsection{Directed regularity and embedding lemmas}
We will need the directed version of Szemerédi's regularity lemma and an embedding lemma for blow-ups. The following formulation is from \cite{alon2004testing}.

\begin{lemma}[Directed regularity lemma]\label{lem:regularity}
For any $\eps\in(0,1)$ and integers $M',M''$, there exist $M$ and $n_0$ such that for any digraph $G$ on $n\ge n_0$ vertices, any initial partition $U_0,U_1,\dots,U_{M''}$, and any $d\in[0,1]$, there exists a partition $V_0,V_1,\dots,V_k$ of $V(G)$ and a spanning subdigraph $G'\subseteq G$ with:
\begin{itemize}
\item $M'\le k\le M$, $|V_0|\le\eps n$, $|V_1|=\dots=|V_k|=\ell$;
\item $G'[V_i]$ is empty for all $i\ge1$;
\item for $1\le i\neq j\le k$, the bipartite digraph $(V_i,V_j)_{G'}$ is either $\eps$-regular with density at least $d$, or has density $0$;
\item every vertex $x$ satisfies $d_{G'}^+(x)>d_G^+(x)-(d+\eps)n$ and similarly for in-degree.
\end{itemize}
\end{lemma}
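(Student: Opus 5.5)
The plan is to derive the lemma from the standard energy-increment proof of Szemerédi's regularity lemma, run simultaneously on the two ``colours'' of arcs. Then we pass to the degree form by the usual cleaning argument, as in the Komlós–Simonovits degree form. For disjoint $A,B\subseteq V(G)$ write $d^{\to}(A,B)=e_G(A\to B)/(|A||B|)$ for the density of arcs directed from $A$ to $B$. Call the bipartite digraph $(A,B)_G$ \emph{$\eps$-regular} if both bipartite graphs $A\to B$ and $B\to A$ are $\eps$-regular in the undirected sense. Equivalently, $(A,B)$ is regular in the $2$-coloured sense for the colouring ``forward/backward''. This reduces the statement to a $2$-coloured (multigraph) regularity lemma.

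\textbf{Step 1 (coloured regularity with initial partition).} Start from a refinement of $U_0,\dots,U_{M''}$ into at least $M'$ classes of equal size plus an exceptional class. Define the index
\[
q(\mathcal P)=\sum_{(i,j)\,\text{ordered}}\frac{|V_i||V_j|}{n^2}\,d^{\to}(V_i,V_j)^2 \le 1,
\]
where the sum runs over ordered pairs of classes of the partition $\mathcal P$. If more than $\eps k^2$ pairs fail to be $\eps$-regular, then either $V_i\to V_j$ or $V_j\to V_i$ fails for each such pair. Take the usual witnessing subsets for each failing pair and form the common refinement. The defect Cauchy–Schwarz argument then raises $q$ by at least $\eps^5/2$. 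This is exactly the undirected calculation, applied to whichever orientation fails. Re-equalise class sizes, dumping leftovers into $V_0$ with the standard bookkeeping so that $|V_0|\le\eps n$. Since $q\le1$, the process stops after $O(\eps^{-5})$ rounds. This gives an upper bound $M$ on $k$ depending only on $\eps,M',M''$, and an $n_0$.

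\textbf{Step 2 (degree form).} Apply Step 1 with a smaller parameter $\eps'\ll\eps$, chosen so that $\eps'\le\eps^2/10$ and the number of irregular pairs is at most $\eps' k^2$. Obtain $G'$ from $G$ by deleting the following arcs:
\begin{itemize}
\item all arcs inside each $V_i$;
\item all arcs between $V_i,V_j$ when the pair is not $\eps'$-regular;
\item for each ordered pair, all arcs $V_i\to V_j$ when $d^{\to}(V_i,V_j)<d+\eps'$.
\end{itemize}
Deleting a whole orientation of an $\eps'$-regular pair leaves the remaining orientation regular, so surviving pairs are $\eps$-regular with density at least $d$ or have density $0$. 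Deleting a sparse orientation removes at most $(d+\eps')\ell$ arcs at each vertex per pair, hence at most $dn+\eps' n$ in total. Arcs inside classes contribute at most $\ell\le n/M'$.

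A vertex can still lose many arcs through irregular pairs. Move every vertex $x$ whose out- or in-degree drops by more than $(d+\eps)n$ into $V_0$, together with its class if needed. A counting argument shows that these vertices number at most $\sqrt{\eps'}\,n$: irregular pairs cover at most $\eps' k^2\ell^2\le \eps' n^2$ arcs in each direction. Then trim all classes back to a common size $\ell$, keeping $|V_0|\le\eps n$. Regularity with parameter $\eps$ survives passing to subsets of proportion at least $1-\sqrt{\eps'}$ by the slicing lemma, at the cost of changing densities by $O(\sqrt{\eps'})$. This is absorbed by the slack between $\eps'$ and $\eps$.

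The main obstacle is the degree condition for vertices moved to $V_0$: the lemma requires $d^+_{G'}(x)>d^+_G(x)-(d+\eps)n$ for \emph{every} vertex, including exceptional ones. I will handle this by keeping all arcs incident to $V_0$ in $G'$. The lemma only constrains $G'[V_i]$ for $i\ge1$ and pairs with $i,j\ge1$, so this is permitted. Then vertices of $V_0$ lose nothing. The only remaining care is checking that the thresholds $\eps'$ versus $\eps$ are chosen consistently through the trimming step.
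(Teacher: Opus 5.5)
The paper does not prove this lemma. It is quoted from Alon--Shapira, in the degree form used by K\"uhn, Osthus, Townsend and Zhao, so a self-contained argument is your own route.

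Step~1 is sound. It is the standard energy increment run on both orientations with the ordered-pair index. Keeping all arcs at $V_0$ also correctly settles the degree condition for exceptional vertices.

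\textbf{The gap.} It is in Step~2, in the claim that deleting a sparse orientation ``removes at most $(d+\eps')\ell$ arcs at each vertex per pair''. This is false. Regularity of $V_i\to V_j$ with density $\rho<d+\eps'$ only says that fewer than $\eps'\ell$ vertices of $V_i$ have more than $(\rho+\eps')\ell$ out-neighbours in $V_j$. Those few vertices may have as many as $\ell$, and one vertex can be exceptional for every sparse pair at once.

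For example, let $G$ consist of one vertex sending arcs to all others. Every pair is $\eps'$-regular of density at most $1/\ell$, so every pair is sparse, and that vertex loses all $n-1$ out-arcs.

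Your rule of moving every vertex whose drop exceeds $(d+\eps)n$ would catch such vertices. However, your counting only bounds the vertices damaged by irregular pairs. Nothing shows that the vertices damaged by sparse pairs are few, so $|V_0|\le\eps n$ is not established. A global count cannot fix this: sparse pairs may carry about $(d+\eps')n^2$ arcs, and Markov's inequality at threshold $(d+\eps)n$ gives nothing useful.

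\textbf{The missing step.} You need a double count that uses the regularity of the sparse pairs.
\begin{itemize}
\item Call $x\in V_i$ bad for a regular sparse ordered pair $(i,j)$ if $x$ has more than $(d+2\eps')\ell$ out-neighbours in $V_j$. Define this analogously for in-neighbours from a sparse pair $(j,i)$.
\item Each cluster then has fewer than $2\eps'k\ell$ bad incidences. Hence fewer than $2\sqrt{\eps'}\ell$ of its vertices are bad for more than $\sqrt{\eps'}k$ pairs; move these to $V_0$.
\item Every other vertex loses at most $(d+2\eps'+\sqrt{\eps'})n$ arcs in each direction through sparse pairs.
\item The moved vertices number $O(\sqrt{\eps'})\ell$ per cluster. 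Clusters with more than $\sqrt{\eps'}k$ irregular partners number only $O(\sqrt{\eps'})k$ and can be discarded whole.
\item Trimming to a common size then keeps $|V_0|=O(\sqrt{\eps'})n\le\eps n$ once $\eps'\ll\eps^2$.
\end{itemize}

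\textbf{Two smaller repairs.} First, the bound $\ell\le n/M'$ for arcs inside clusters is useless when $M'$ is small. Run Step~1 with at least $\max\{2M',\lceil 4/\eps\rceil\}$ classes; this also keeps $k\ge M'$ after discarding. Second, by the slicing lemma densities move by at most $\eps'$, not $O(\sqrt{\eps'})$. That is exactly what the threshold $d+\eps'$ needs for surviving pairs to keep density at least $d$.
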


The \emph{reduced digraph} $R$ has vertex set $\{V_1,\dots,V_k\}$ and an arc $ij$ whenever $(V_i,V_j)_{G'}$ is $\eps$-regular with density $\ge d$.

\begin{lemma}[Embedding lemma]\label{lem:embedding}
For any $d\in(0,1)$ and maximum degree $\Delta\ge1$, there exists $\eps_0>0$ such that the following holds. Let $G$ be a digraph, $R$ the reduced digraph obtained from an $\eps$-regular partition with $\eps\le\eps_0$, cluster size $\ell$, and density parameter $d$. If $H$ is a digraph with $\Delta(H)\le\Delta$ and $H\subseteq R^s$ (the blow-up of $R$ where each vertex is replaced by $s$ independent vertices), and $\ell\ge s/\eps_0$, then $H\subseteq G$.
\hfill (see e.g. \cite[Lemma 4.2]{kuhn2017structure})
\end{lemma}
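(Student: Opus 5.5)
We prove this by the standard greedy embedding argument (the ``key lemma'' of Koml\'os and Simonovits), adapted to directed pairs. Fix $d$ and $\Delta$, and choose $\eps_0>0$ so small that
\[
(d-\eps_0)^{\Delta}\ \ge\ (\Delta+2)\,\eps_0 .
\]
Let $\phi\colon V(H)\to V(R)$ be the homomorphism given by $H\subseteq R^s$, and write $V_{\phi(x)}$ for the cluster assigned to $x$. Two properties of $\phi$ are used. First, every arc $xy$ of $H$ satisfies $\phi(x)\phi(y)\in E(R)$, so the bipartite digraph of arcs from $V_{\phi(x)}$ to $V_{\phi(y)}$ in $G'$ is $\eps$-regular with density at least $d$. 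Second, at most $s$ vertices of $H$ are mapped to any single cluster. A pair of arcs $xy,yx$ in $H$ is handled by imposing both the out- and the in-condition below.

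Order the vertices of $H$ as $x_1,\dots,x_h$ and embed them one at a time. For each vertex $y$ not yet embedded we maintain a candidate set $C(y)\subseteq V_{\phi(y)}$, initially equal to $V_{\phi(y)}$. Suppose $x=x_i$ is to be embedded. We choose its image $v\in C(x)$ subject to three requirements:
\begin{enumerate}
\item[(a)] $v$ has not already been used;
\item[(b)] for every unembedded out-neighbour $y$ of $x$, $v$ has at least $(d-\eps)|C(y)|$ out-neighbours in $C(y)$;
\item[(c)] for every unembedded in-neighbour $y$ of $x$, $v$ has at least $(d-\eps)|C(y)|$ in-neighbours in $C(y)$.
\end{enumerate}
After choosing $v$, we replace $C(y)$ by $C(y)\cap N^+(v)$, respectively $C(y)\cap N^-(v)$, for each such neighbour $y$.

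The key counting step, which is the only real obstacle, checks that a valid choice always exists. Each candidate set shrinks at most $\Delta$ times, each time by a factor of at least $d-\eps$. Hence throughout the process
\[
|C(y)|\ \ge\ (d-\eps)^{\Delta}\ell\ \ge\ \eps\ell .
\]
Since $|C(y)|\ge\eps\ell$, $\eps$-regularity of the relevant directed pair applies to $C(y)$. It shows that fewer than $\eps\ell$ vertices of $V_{\phi(x)}$ have fewer than $(d-\eps)|C(y)|$ neighbours of the required direction in $C(y)$. As $x$ has at most $\Delta$ unembedded neighbours, conditions (b) and (c) exclude fewer than $\Delta\eps\ell$ vertices of $C(x)$. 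Condition (a) excludes at most $s-1\le\eps_0\ell$ vertices, since only vertices mapped to the cluster $\phi(x)$ can occupy it. The candidate set $C(x)$ has itself already been shrunk to size at least $(d-\eps)^{\Delta}\ell$. The number of admissible choices for $v$ is therefore at least
\[
(d-\eps_0)^{\Delta}\ell-\Delta\eps_0\ell-\eps_0\ell\ >\ 0
\]
by the choice of $\eps_0$, using $\ell\ge s/\eps_0$.

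The final embedding is injective by (a). Every arc $x_ix_j$ of $H$ with $i<j$ is present in $G'\subseteq G$, because the image of $x_j$ was chosen inside a set that had been restricted to the appropriate out- or in-neighbourhood of the image of $x_i$. This gives $H\subseteq G$. The only points needing care are the direction of regularity in directed pairs and the fact that the bound on used vertices comes from $H\subseteq R^s$; the number of vertices of $H$ is not relevant.
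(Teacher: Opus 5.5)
For an oriented graph $H$ your argument is the standard Koml\'os--Simonovits key lemma, and it is correct. The regularity count of bad vertices, the invariant $|C(y)|\ge(d-\eps)^{\Delta}\ell\ge\eps\ell$, the bound $s-1\le\eps_0\ell$ on used vertices, and the final check of arcs all go through. (This reads $\Delta$ as a bound on the number of neighbours of a vertex of $H$; if $\Delta(H)$ bounds in- and out-degrees separately, use $2\Delta$ in your choice of $\eps_0$.) The paper gives no proof of its own; it only points to Lemma~4.2 of K\"uhn, Osthus, Townsend and Zhao. Every place it uses the lemma has $H=T_{r+1}$ or $H=T_{r+1}^t$, which are oriented.

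The step that fails is your one-line treatment of 2-cycles. Suppose $xy$ and $yx$ are both arcs of $H$ and $x$ is embedded at $v$. Then you must shrink $C(y)$ to $C(y)\cap N^+(v)\cap N^-(v)$. Conditions (b) and (c) only say that $C(y)\cap N^+(v)$ and $C(y)\cap N^-(v)$ each have size at least $(d-\eps)|C(y)|$. These two sets may be disjoint, so the invariant on $|C(y)|$ is lost. Nor can you apply regularity of the reverse pair to the set $C(y)\cap N^+(v)$. That set depends on $v$, and regularity only bounds the number of vertices that are bad for a \emph{fixed} set.

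This cannot be repaired, because the statement is false for digraphs with 2-cycles under the paper's definition of $R$. There, $ij$ is an arc whenever the arcs \emph{from} $V_i$ \emph{to} $V_j$ in $G'$ form an $\eps$-regular pair of density at least $d$. Take $d<1/2$ and clusters $V_1,\dots,V_k$ of a large size $\ell$ with no arcs inside them. On every pair of vertices from different clusters, put exactly one arc, in a uniformly random direction. With high probability every directed pair $(V_i,V_j)_G$ is $\eps$-regular of density close to $1/2$, since the reverse pair is its bipartite complement. So $R$ has both arcs between any two of its vertices, and the 2-cycle lies in $R^s$ for every $s$. But $G$ is an oriented graph and contains no 2-cycle.

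You should therefore state and prove the lemma for oriented $H$, which is all the paper needs. If 2-cycles in $H$ were genuinely required, the reduced digraph would have to be redefined. A double arc between $i$ and $j$ should be present only when the undirected graph formed by the 2-cycles of $G'$ between $V_i$ and $V_j$ is $\eps$-regular of density at least $d$. Your greedy step would then treat a 2-cycle $xy$ of $H$ as a single edge of that auxiliary graph. It would require $v$ to have at least $(d-\eps)|C(y)|$ such 2-cycle neighbours in $C(y)$, and restrict $C(y)$ to them.
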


\subsection{Hypergraph containers for digraphs}
The following container theorem is a key tool. It is a direct consequence of the hypergraph container theorem of Saxton and Thomason \cite{saxton2015hypergraph} and was proved in \cite[Theorem 3.3]{kuhn2017structure} for digraphs. We state it exactly as in \cite{kuhn2017structure}.

\begin{theorem}[K\"uhn, Osthus, Townsend, Zhao, Theorem 3.3]\label{thm:container}
Let $H$ be an oriented graph with $h=v(H)$ and $e(H)\ge 2$, and let $a\ge 1$. For every $\eps>0$ there exists $c>0$ such that for all sufficiently large $N$ there exists a collection $\cC$ of digraphs on vertex set $[N]$ with the following properties.
\begin{enumerate}
\item[(a)] Every $H$-free digraph $I$ on $[N]$ is contained in some $G\in\cC$ (in the labelled sense).
\item[(b)] Every $G\in\cC$ contains at most $\eps N^{h}$ copies of $H$, and $e_a(G)\le \ex_a(N,H)+\eps N^2$.
\item[(c)] $\log|\cC|\le c N^{2-1/m(H)}\log N$, where $m(H)=\max_{H'\subseteq H,\,e(H')>1}\frac{e(H')-1}{v(H')-2}$.
\end{enumerate}
\end{theorem}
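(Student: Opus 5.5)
The container theorem is quoted from \cite[Theorem 3.3]{kuhn2017structure}. My plan is to re-derive it from the hypergraph container theorem of Saxton and Thomason \cite{saxton2015hypergraph}, applied to a suitable auxiliary hypergraph, and then iterate.

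\textbf{Encoding digraphs as vertex sets.} Encode digraphs on $[N]$ as subsets of the ground set $X$ of all $N(N-1)$ ordered pairs. Let $\mathcal{H}$ be the $e(H)$-uniform hypergraph on $X$ whose edges are the arc sets of the labelled copies of $H$ in the complete digraph on $[N]$. Then an $H$-free digraph is exactly an independent set of $\mathcal{H}$.

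\textbf{Co-degree estimates.} For $2\le j\le e(H)$, a set $\sigma$ of $j$ arcs lies in a copy of $H$ only if it spans a subgraph $H'\subseteq H$ with $e(H')=j$. The number of copies of $H$ containing $\sigma$ is then $O(N^{h-v(H')})$. The average degree of $\mathcal{H}$ is $\Theta(N^{h-2})$. Setting $\tau = N^{-1/m(H)}$ and using the definition of $m(H)$, one checks the Saxton--Thomason co-degree condition
\[
\delta(\mathcal{H},\tau)=O\!\left(\max_{j}\tau^{-(j-1)}\max_{H'}N^{2-v(H')}\right)
\]
is at most a small constant. The key inequality is $\tau^{j-1}N^{v(H')-2}\ge 1$, which is equivalent to $(e(H')-1)/(v(H')-2)\le m(H)$.

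\textbf{One round of containers.} With this $\tau$, Saxton--Thomason yields containers of size $\log$-count $O(\tau |X|\log(1/\tau))=O(N^{2-1/m(H)}\log N)$. Each container $C$ satisfies $e(\mathcal{H}[C])\le \eps' e(\mathcal{H})$, i.e.\ at most $\eps' N^h$ copies of $H$. Iterating a bounded number of times (depending on $\eps$), replacing $\mathcal{H}$ by $\mathcal{H}[C]$, gives the copy-count bound in (b) for any prescribed $\eps$. The number of containers grows only by a constant factor in the exponent, which gives (c). Property (a) is immediate from the container theorem.

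\textbf{The weighted bound in (b) (main obstacle).} The remaining step is to convert ``few copies of $H$'' into $e_a(G)\le\ex_a(N,H)+\eps N^2$. For this I would prove a directed removal lemma, using Lemma~\ref{lem:regularity} and Lemma~\ref{lem:embedding}. If $G$ has at most $\delta N^h$ copies of $H$, regularize $G$ with small $d$ and $\eps$. The reduced digraph $R$ must be $H$-free: otherwise Lemma~\ref{lem:embedding}, via a counting version, gives $\Omega(N^h)$ copies of $H$. Hence the cleaned digraph $G'$ is $H$-free after deleting $O((d+\eps)N^2)$ arcs. Each deleted pair costs at most $a$ in $e_a$, since removing an arc of a 2-cycle lowers the weight by $a-1\le a$. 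It follows that
\[
e_a(G)\le e_a(G')+O(a(d+\eps)N^2)\le \ex_a(N,H)+\eps N^2,
\]
after choosing $\delta$, $d$ and the regularity $\eps$ appropriately before the iterated container step.

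The delicate point is the counting/supersaturation version of the embedding lemma for a fixed $H$. I would obtain it by the standard argument: embedding lemmas count copies up to a factor $(d-\eps)^{e(H)}\ell^{h}$, which is a positive fraction of $N^h$ since $k\le M$.
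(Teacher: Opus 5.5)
The paper gives no proof of this statement beyond citing \cite{kuhn2017structure} and remarking that it is a direct consequence of the Saxton--Thomason container theorem. Your outline is exactly that standard derivation: containers for the $e(H)$-uniform hypergraph on the $N(N-1)$ ordered pairs whose edges are copies of $H$, followed by a directed removal lemma for the weighted bound. The encoding, the co-degree exponent check and the bound in (c) are fine, with two small repairs.

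\emph{Choice of $\tau$.} With $\tau=N^{-1/m(H)}$ exactly, the inequality $\tau^{j-1}N^{v(H')-2}\ge1$ only gives $\delta(\mathcal{H},\tau)=O(1)$, not a small constant. Take $\tau=KN^{-1/m(H)}$ with $K=K(\eps,H)$ large, so that the $j$-th term gains a factor $K^{-(j-1)}$.

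\emph{Iteration.} If you iterate by hand, you must re-check the co-degree condition for $\mathcal{H}[C]$. It holds because, while $e(\mathcal{H}[C])>\eps e(\mathcal{H})$, the average degree of $\mathcal{H}[C]$ is at least $\eps$ times that of $\mathcal{H}$ and co-degrees do not increase; this costs another factor $1/\eps$ in $K$. Alternatively, quote the iterated form of the Saxton--Thomason theorem, which directly gives $e(\mathcal{H}[C])\le\eps e(\mathcal{H})$.

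The step that fails as written is in your re-derivation of the removal lemma. The inference ``$R$ is $H$-free, hence the cleaned $G'$ is $H$-free'' is not valid. A copy of $H$ in $G'$ projects onto $R$ only as a \emph{homomorphic} image of $H$, not as a copy. For the very digraph this paper needs, $H=T_{r+1}^t$ with $t\ge2$, the reduced digraph $R=T_{r+1}$ is $H$-free, yet a dense regular blow-up of it contains $H$.

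What you must rule out is $H\subseteq R^{h}$. First also delete the at most $2\eps N^2$ arcs at $V_0$. Then any copy of $H$ in $G'$ gives a homomorphism $H\to R$, since there are no arcs inside clusters and arcs occur only in dense regular pairs; hence $H\subseteq R^{h}$. The counting form of Lemma~\ref{lem:embedding} then yields $\Omega(\ell^{h})=\Omega(N^h)$ copies of $H$ in $G$ (as $k\le M$), contradicting the choice of $\delta$.

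More simply, you need not re-prove this at all, since the removal lemma is already available as Lemma~\ref{lem:removal}. Take $\eps'$ from it with $\gamma=\eps/a$, and run the containers with $\min\{\eps,\eps'\}$. Delete at most $(\eps/a)N^2$ arcs from each container to make it $H$-free. One arc deletion lowers $e_a$ by at most $\max\{1,a-1\}\le a$, so $e_a(G)\le\ex_a(N,H)+\eps N^2$.
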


For our forbidden digraph $H=T_{r+1}^t$, we need to ensure that the theorem applies. The condition required for the theorem to hold is that $H$ is an oriented graph (no 2‑cycles) – which $T_{r+1}^t$ is – and that the parameter $a$ is chosen appropriately. In our applications we will use $a=\log 3$ for oriented graphs and $a=2$ for digraphs. The theorem does not require any further density condition on $H$; it works for any oriented graph $H$. Therefore we can apply it directly to $T_{r+1}^t$ without additional verification.

\subsection{Removal lemma}
We also need the directed removal lemma of Alon and Shapira \cite{alon2004testing}.

\begin{lemma}[Removal lemma]\label{lem:removal}
For any fixed digraph $H$ on $h$ vertices and any $\gamma>0$, there exists $\eps'>0$ such that for all large $n$, any digraph $G$ on $n$ vertices containing at most $\eps' n^h$ copies of $H$ can be made $H$-free by deleting at most $\gamma n^2$ arcs.
\end{lemma}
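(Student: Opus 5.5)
We prove the removal lemma by the standard regularity argument: apply the directed regularity lemma (\cref{lem:regularity}), delete the few arcs that are not covered by dense regular pairs, and show that any surviving copy of $H$ forces $\Omega(n^h)$ copies of $H$ in $G$.

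\textbf{Setup.} Fix $H$ on $h$ vertices and $\gamma>0$. Choose $d=\gamma/4$ and $M'\ge 8/\gamma$. Then choose $\eps\le\min\{\gamma/8,\eps_0\}$, where $\eps_0$ is small enough for a counting version of \cref{lem:embedding} with density $d$ and $\Delta=h$. Apply \cref{lem:regularity} with these parameters (trivial initial partition) to obtain the partition $V_0,V_1,\dots,V_k$, the subdigraph $G'$, and the bound $k\le M$.

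\textbf{Deletion step.} Let $G''$ be obtained from $G$ by deleting three kinds of arcs:
\begin{itemize}
\item arcs of $G$ not in $G'$: by the degree condition there are at most $(d+\eps)n^2$ of them;
\item arcs meeting $V_0$: at most $2\eps n^2$;
\item arcs inside some cluster $V_i$: at most $k\ell^2\le n^2/k\le n^2/M'$.
\end{itemize}
With the choices above the total is at most $\gamma n^2$. Every arc of $G''$ then lies in an ordered pair $(V_i,V_j)$ that is $\eps$-regular of density at least $d$, i.e.\ it corresponds to an arc $ij$ of the reduced digraph $R$. Regularity is defined separately for each direction, so a 2-cycle of $G''$ between $V_i$ and $V_j$ uses both arcs $ij$ and $ji$ of $R$.

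\textbf{Many copies.} Suppose $G''$ still contains a copy of $H$. Mapping each vertex to the cluster containing it gives a homomorphism $\phi\colon H\to R$, possibly non-injective, so $H\subseteq R^{h}$. Now embed $V(H)$ greedily in a fixed order. Each vertex $x$ is placed in $V_{\phi(x)}$, at a vertex with the correct out- and in-neighbourhood sizes into the candidate sets of its unembedded neighbours. By $\eps$-regularity (each candidate set shrinks by a factor at least $d-\eps$ at each step and stays above $\eps\ell$), at each step at least $(d-\eps)^{h}\ell/2$ choices remain, even after excluding previously used vertices. This yields at least $\bigl((d-\eps)^h\ell/2\bigr)^h$ labelled copies. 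Since $\ell\ge(1-\eps)n/M$, this is at least $c\,n^h$ with $c=c(d,h,M)>0$. Setting $\eps'<c/h!$, this contradicts the hypothesis that $G$ has at most $\eps' n^h$ copies. Hence $G''$ is $H$-free.

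\textbf{Main obstacle.} The delicate point is that \cref{lem:embedding} as stated only gives existence of one copy, whereas we need this quantitative counting version. It must handle non-injective homomorphisms (several vertices of $H$ in one cluster) and the two directions of a pair being regular independently. I would prove it directly via the greedy neighbourhood-tracking argument above, treating each ordered pair $(V_i,V_j)$ as its own bipartite graph of arcs. Alternatively, one can apply \cref{lem:embedding} to the blow-up $H^{s}$ and use a supersaturation/averaging argument over random $h$-subsets of clusters to convert a single embedding into $\Omega(n^h)$ copies.
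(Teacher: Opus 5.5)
\textbf{There is a genuine gap when $H$ contains a 2-cycle.} The paper does not prove this lemma; it quotes Alon and Shapira, so your argument has to stand on its own. For an oriented graph $H$ your proof is the standard regularity argument and it works. That is the only case the paper needs ($H=T_{r+1}^t$). But the lemma is stated for every digraph $H$, and your counting step fails as soon as $H$ contains a 2-cycle.

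Your greedy embedding needs each adjacency of $H$ to be controlled by one regular pair. Suppose $xy,yx\in E(H)$ with $\phi(x)=i$ and $\phi(y)=j$. After $x$ is placed at $v\in V_i$, the candidate set of $y$ becomes $C_y\cap N^+(v)\cap N^-(v)$. Regularity of the ordered pair $(V_i,V_j)$ bounds $|C_y\cap N^+(v)|$, and regularity of $(V_j,V_i)$ bounds $|C_y\cap N^-(v)|$. Nothing bounds the intersection, precisely because the two directions are regularised independently.

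Here is a concrete failure. Let $V(G)=A\cup B$. Let the arcs from $A$ to $B$ form a random bipartite graph of density $1/2$, and let the arcs from $B$ to $A$ occupy exactly the complementary pairs. Then turn $\varepsilon''n^2$ random cross pairs into 2-cycles, with $\varepsilon''\le\varepsilon'$. An equipartition refining $\{A,B\}$, together with $G'=G$, is a valid output of Lemma~\ref{lem:regularity}. Every pair of clusters between $A$ and $B$ is $\varepsilon$-regular of density about $1/2$ in both directions, so your deletion step removes nothing between $A$ and $B$. Take $H$ to be a single 2-cycle: the hypothesis of the lemma holds, yet $G''$ is not $H$-free. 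So ``a surviving copy of $H$ forces $cn^h$ copies'' is false for such $H$. The same example shows that Lemma~\ref{lem:embedding} cannot be applied to digraphs with 2-cycles under per-direction regularity. Your alternative route via $H^s$ therefore has the same hole.

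\textbf{How to repair it.} Treat 2-cycles as a separate colour.
\begin{itemize}
\item Let $G_{\leftrightarrow}$ be the undirected graph of pairs $\{u,v\}$ with $uv,vu\in E(G)$.
\item Use a partition that is simultaneously $\varepsilon$-regular for both ordered pairs $(V_i,V_j)$ and $(V_j,V_i)$, and for the pair $(V_i,V_j)$ in $G_{\leftrightarrow}$. A multicolour form of the regularity lemma provides this.
\item In the deletion step, also delete one arc from every 2-cycle lying in a pair that is irregular, or has density below $d$, in $G_{\leftrightarrow}$. This costs a further $O((d+\varepsilon)n^2)$ arcs.
\end{itemize}
Now every adjacent pair $\{x,y\}$ of $H$ imposes exactly one neighbourhood condition. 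It comes from $G_{\leftrightarrow}$ if $xy,yx\in E(H)$, and from the appropriate ordered pair otherwise. Your greedy count then goes through verbatim.

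Alternatively, state and prove the lemma only for oriented $H$, which is all the paper uses. One minor point applies in either version: isolated vertices of a surviving copy may lie in $V_0$, so define $\phi$ arbitrarily on them.
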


\section{Weighted extremal theorem for $T_{r+1}^t$}
In this section we prove a weighted Erd\H{o}s–Stone type theorem for the blow-up $T_{r+1}^t$. The following result extends the extremal result of Brown and Harary \cite{brown1970extremal} and the weighted version proved in \cite{kuhn2017structure}. Note that \cite{kuhn2017structure} actually treats the two discrete cases $a=2$ and $a=\log 3$, but the same proof works for the whole interval $a\in(3/2,2]$ by a continuity argument (see the remark after Lemma 4.1 in \cite{kuhn2017structure}). We therefore state it for the continuous parameter.

\begin{theorem}\label{thm:weighted-extremal}
For any integers $r,t\ge1$, real $a\in(3/2,2]$, and $\gamma>0$, there exists $n_0$ such that for all $n\ge n_0$, every digraph $G$ on $n$ vertices with $e_a(G)\ge a\,t_r(n)+\gamma n^2$ contains $T_{r+1}^t$ as a subdigraph.
\end{theorem}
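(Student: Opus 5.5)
Since $T_{r+1}^t\subseteq T_{r+1}^s$ for $s\ge t$, it suffices to find $T_{r+1}^s$ for some large $s$; we use regularity and reduce to the weighted Turán-type statement for $T_{r+1}$ itself. The plan has three steps: regularise $G$, show that the reduced digraph is dense in the $e_a$ sense, and then invoke the weighted extremal result for $T_{r+1}$-free digraphs due to K\"uhn, Osthus, Townsend and Zhao. That result says every $T_{r+1}$-free digraph $R$ on $k$ vertices satisfies $e_a(R)\le a\,t_r(k)$ for $a\in(3/2,2]$; this is Lemma 4.1 of \cite{kuhn2017structure} together with the continuity remark.

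First, the regularisation. We choose $d\ll\eps\ll\gamma$ and $M'$ large, and apply Lemma~\ref{lem:regularity} to $G$. The arcs lost in passing to $G'$ come from three sources: arcs inside clusters or touching $V_0$, arcs in irregular or sparse pairs, and degree losses. Together these cost at most $(2d+2\eps+1/M')n^2$ arcs, so $e_a(G')\ge a\,t_r(n)+\frac{\gamma}{2}n^2$. Each ordered pair $(V_i,V_j)$ contributes at most $\ell^2$ arcs in each direction. It follows that
\[
e_a(G')\le \ell^2\bigl(a f_2(R)+2f_1(R)\bigr)\,,
\]
but this bound is too crude for pairs with a single arc in $R$ and needs refinement. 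We therefore work with weighted densities. For a pair $\{V_i,V_j\}$, let $d_{ij}$ and $d_{ji}$ be the arc densities in the two directions. Using $d_{ij}+d_{ji}-1\le \min(d_{ij},d_{ji})$, the contribution of this pair to $e_a(G')$ is at most $\ell^2\max\{a,\,1+(a-1)\min\}$. A cleaner route is to define $R$ to contain both arcs $ij,ji$ whenever both densities are at least $d$, and to count pairs: if only one direction is present the pair contributes at most $\ell^2$. This gives
\[
e_a(G')\le \ell^2 e_a(R)+O(d+\eps)n^2\,,
\]
and hence $e_a(R)\ge a\,t_r(k)+\frac{\gamma}{4}k^2$ using $n\approx k\ell$.

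Second, the extremal input. By the cited weighted result, $R$ contains a copy of $T_{r+1}$ whose every arc is a regular pair of density at least $d$. Then $T_{r+1}^t\subseteq R^t$, and $T_{r+1}^t$ has bounded maximum degree $rt$. Since $\ell\ge t/\eps_0$ once $n$ is large, Lemma~\ref{lem:embedding} embeds $T_{r+1}^t$ in $G$, as required.

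The main obstacle is the first step: the transfer from $e_a(G)$ to $e_a(R)$, namely checking that single-direction dense pairs do not gain weight over $\ell^2$. This holds because an oriented pair with only one dense direction has at most $\ell^2+d\ell^2$ arcs in total, so its contribution is at most $(1+d)\ell^2$ plus negligible error. The secondary issue is citing the weighted $T_{r+1}$ extremal bound for all $a\in(3/2,2]$. We accept it by the continuity remark; if needed, it can be re-derived by induction on $r$ via deleting a vertex of minimum weighted degree, which is where the condition $a>3/2$ is used.
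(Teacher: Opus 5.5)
Your proposal is correct and follows essentially the same route as the paper: regularise, show the reduced digraph satisfies $e_a(R)>a\,t_r(k)$, invoke the K\"uhn--Osthus--Townsend--Zhao weighted Tur\'an bound for $T_{r+1}$, and embed $T_{r+1}^t$ via the embedding lemma. Your observation that a one-directional pair contributes at most $\ell^2$ (the reverse direction is empty in $G'$) is exactly what justifies comparing with the unweighted $e_a(R)$. The only slip is the constant hierarchy: the embedding lemma needs $\eps\le\eps_0(d,rt)$, so you must take $\eps\ll d\ll\gamma$ rather than $d\ll\eps$.
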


\begin{proof}
We follow the strategy of the KOTZ proof for $T_{r+1}$. Set $d=\gamma/4$, $\Delta=\Delta(T_{r+1}^t)$, and let $\eps_0$ be given by Lemma~\ref{lem:embedding} for these parameters. Choose $\eps$ small enough so that $\eps\le\eps_0$ and $\delta:=(a-1)d-\eps-a\eps^2/2-a\eps>0$. Let $s=t(r+1)$.

Apply Lemma~\ref{lem:regularity} to $G$ with parameters $\eps,d$ to obtain a partition $V_0,V_1,\dots,V_k$ and a pure digraph $G'\subseteq G$ with clusters of size $\ell$, where $\ell\ge (1-\eps)n/k\ge s/\eps_0$ for large $n$. Let $R$ be the reduced digraph on $k$ vertices.

From the regularity lemma we have
\[
e_a(G)\le e_a(G')+(d+\eps)n^2\le a\eps n^2 + e_a^*(R)\ell^2+(d+\eps)n^2,
\]
where $e_a^*(R)=\sum_{ij\in E(R)}(a d_{ij}^2+d_{ij}^1)$ and $d_{ij}^2,d_{ij}^1$ are the densities of 2‑cycles and 1‑cycles in the pair $(V_i,V_j)_{G'}$. Using $e_a(G)\ge a t_r(n)+\gamma n^2$ and $t_r(n)=\bigl(1-\frac1r\bigr)\frac{n^2}{2}+O(n)$, we obtain after elementary manipulation
\[
e_a^*(R) \ge a\Bigl(1-\frac1r\Bigr)\frac{k^2}{2}+\delta k^2 > a\,t_r(k).
\]

Now $R$ is $T_{r+1}$-free: otherwise, by the embedding lemma, $G'$ would contain $T_{r+1}^t$. The inequality $e_a^*(R)>a\,t_r(k)$ forces $R$ to contain $T_{r+1}$ by the weighted extremal result of \cite{kuhn2017structure} (which is stated for $a=2$ and $a=\log 3$ but extends to the whole interval $(3/2,2]$ by a simple interpolation; see the remark after Lemma 4.1 in that paper). Hence $R\supseteq T_{r+1}$. 

Now we need to lift this to a copy of $T_{r+1}^t$ in $G$. Since $R$ contains $T_{r+1}$, applying an $s$-fold blow-up to $R$ yields that $R^s$ contains $(T_{r+1})^s$. Notice that $H = T_{r+1}^t$ has $v(H) = t(r+1) = s$ vertices. In $(T_{r+1})^s$, each of the $r+1$ parts is an independent set of size $s$. Since $s \ge t$, we can arbitrarily select $t$ vertices from each of these $r+1$ parts to form exactly $T_{r+1}^t$. Thus, we have the rigorous subgraph chain $T_{r+1}^t \subseteq (T_{r+1})^s \subseteq R^s$. Applying the embedding lemma (Lemma~\ref{lem:embedding}) with $H=T_{r+1}^t$ and blow-up parameter $s$ yields $T_{r+1}^t\subseteq G'\subseteq G$. 
\end{proof}

As an immediate corollary we obtain the asymptotics of the weighted Turán number.

\begin{corollary}\label{cor:asymptotic}
For $r,t\ge1$ and $a\in(3/2,2]$, we have $\ex_a(n,T_{r+1}^t)=a\,t_r(n)+o(n^2)$. Moreover, any $n$-vertex $T_{r+1}^t$-free digraph $G$ with $e_a(G)=\ex_a(n,T_{r+1}^t)$ differs from $\DT_r(n)$ by $o(n^2)$ arcs.
\end{corollary}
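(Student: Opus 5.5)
The corollary has two parts: the asymptotic value of $\ex_a(n,T_{r+1}^t)$, and a stability statement for extremal digraphs. I will prove them in that order.

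\emph{Asymptotics.} The lower bound $\ex_a(n,T_{r+1}^t)\ge a\,t_r(n)$ is already recorded in the notation section: $\DT_r(n)$ is $r$-partite, hence $T_{r+1}$-free, hence $T_{r+1}^t$-free. For the upper bound, fix $\gamma>0$. Theorem~\ref{thm:weighted-extremal} gives $n_0$ such that every $T_{r+1}^t$-free digraph $G$ on $n\ge n_0$ vertices satisfies $e_a(G)<a\,t_r(n)+\gamma n^2$. Since $\gamma$ is arbitrary, $\ex_a(n,T_{r+1}^t)\le a\,t_r(n)+o(n^2)$, and the first claim follows.

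\emph{Stability.} Let $G$ be extremal, so $e_a(G)=a\,t_r(n)+o(n^2)$. The plan is to run the regularity argument from the proof of Theorem~\ref{thm:weighted-extremal} again, now with $\gamma$ replaced by a small error parameter.
\begin{enumerate}
\item Apply Lemma~\ref{lem:regularity} to $G$. By the embedding step in that proof, the reduced digraph $R$ is $T_{r+1}$-free: a copy of $T_{r+1}$ in $R$ would lift to $T_{r+1}^t\subseteq G$ via Lemma~\ref{lem:embedding}.
\item The same counting as before gives $e_a^*(R)\ge a\,t_r(k)-\eta k^2$, with $\eta\to0$ as $\eps,d\to0$.
\item Invoke the weighted stability result of \cite{kuhn2017structure} for $T_{r+1}$-free digraphs with $a\in(3/2,2]$. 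It says that such an $R$ is $o(k^2)$-close to $\DT_r(k)$. In particular, almost all pairs of clusters carry double arcs of density close to $1$, and the clusters split into $r$ groups of size roughly $k/r$.
\item Pull this back to $G$. Let $V'_1,\dots,V'_r$ be the unions of the clusters in each group, and put $V_0$ into one part arbitrarily. Arcs of $G$ inside a group, or on irregular or low-density pairs, or deleted when passing to $G'$, number at most $(d+\eps)n^2+o(n^2)$. The missing double arcs between groups are controlled because $e_a(G)$ attains $a\,t_r(n)$ up to $o(n^2)$. The part sizes are nearly balanced since $t_r$ is maximised at equal parts. Together these give $|E(G)\triangle E(\DT_r(n))|=o(n^2)$ after relabelling.
\end{enumerate}

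\emph{Main obstacle.} The delicate point is step~3, importing stability for weighted $e_a^*$ on $R$. The densities $d^1_{ij},d^2_{ij}$ need not be $0$ or $1$, so I would first round $R$: replace a pair by a double arc when $d^2_{ij}$ is large, by a single arc otherwise, and delete it when it is sparse. Since $a>3/2$, a tournament-like triple with single arcs gives a weight smaller than $a$ per pair. That inequality is the one behind KOTZ's stability proof, and I would cite or reproduce it there.
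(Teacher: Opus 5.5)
Your proof of $\ex_a(n,T_{r+1}^t)=a\,t_r(n)+o(n^2)$ is exactly what the paper intends. The lower bound comes from $\DT_r(n)$, and the upper bound from Theorem~\ref{thm:weighted-extremal} with $\gamma$ arbitrary.

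The paper gives no argument for the second claim; it calls the corollary immediate. In fact the claim does not follow from Theorem~\ref{thm:weighted-extremal} alone. Within the paper it is the special case of Theorem~\ref{thm:stability} where $e_a(G)=\ex_a(n,T_{r+1}^t)\ge a\,t_r(n)$. There the hypothesis holds for every $\gamma>0$, so $G$ is $\beta n^2$-close to $\DT_r(n)$ for each fixed $\beta$ once $n$ is large.

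What you wrote is in effect an inline proof of Theorem~\ref{thm:stability}, using the same regularity scheme as Section~\ref{sec:stability}, with two differences. First, on the reduced digraph you import the KOTZ stability theorem for genuine $T_{r+1}$-free digraphs, instead of the real-weighted Lemma~\ref{lem:weighted-stability}. Second, when lifting you use only the cluster partition with $o(k^2)$ internal arcs. You get completeness of cross pairs and balance of parts from a global count, rather than from per-pair bounds $w_{ij}\ge a-\eta$. Concretely, if $D,S,M$ count cross pairs carrying a double arc, a single arc, or nothing, then $aD+S\ge a\,t_r(n)-o(n^2)$ and $D+S+M\le t_r(n)$ give $(a-1)S+aM=o(n^2)$ and $D+S+M\ge t_r(n)-o(n^2)$. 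This is sound and somewhat more economical, since no stability statement for real-valued weights is needed.

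The one thing to fix is the direction of your rounding. Suppose a pair gets a double arc only when $d^2_{ij}$ is close to $1$, and a single arc otherwise. Then a pair with $d^2_{ij}=d^1_{ij}=\tfrac12$ has weight $(a+1)/2$ but is rounded to $1$. With $\Theta(k^2)$ such pairs the rounded digraph may violate $e_a\ge a\,t_r(k)-\eta k^2$. Ruling this out beforehand would need the very stability you want to apply.

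Round up instead, or simply use $R$ itself: arc $ij$ iff the directed pair $(V_i,V_j)_{G'}$ is regular of density at least $d$. A cluster pair with one direction present contributes at most $\ell^2$ to $e_a(G')$, and one with both directions at most $a\ell^2$. Hence $e_a(G')\le\ell^2 e_a(R)+a\eps n^2$, and $R$ is $T_{r+1}$-free by the embedding lemma.

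Relatedly, unweighted stability does not give your step~3 claim that most cluster pairs have double-arc density close to $1$. It only says that both directions are present in $R$. Your step~4 does not use that claim, so this is harmless.
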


\section{Stability for $T_{r+1}^t$-free digraphs}\label{sec:stability}

In this section we prove a stability result for $T_{r+1}^t$-free digraphs. It states that any such digraph whose weighted size is close to the extremal value $a\,t_r(n)$ must be close to the complete $r$-partite digraph $\DT_r(n)$.

\begin{theorem}[Stability theorem]\label{thm:stability}
Let $r\ge 2$, $t\ge 1$ and $a\in(3/2,2]$.  For every $\beta>0$ there exist $\gamma>0$ and $n_0$ such that for all $n\ge n_0$, if $G$ is an $n$-vertex $T_{r+1}^t$-free digraph satisfying
\[
e_a(G)\ge a\,t_r(n)-\gamma n^2,
\]
then $G$ can be turned into $\DT_r(n)$ by changing at most $\beta n^2$ arcs.
\end{theorem}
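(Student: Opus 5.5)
We reduce to the $t=1$ stability theorem of K\"uhn, Osthus, Townsend and Zhao for $T_{r+1}$-free digraphs, applied to the reduced digraph. We then pull the structure back to $G$.

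\textbf{Regularity and the reduced digraph.} Given $\beta$, choose constants $1/n_0\ll\gamma\ll\eps\ll d\ll\beta'\ll\beta$. Apply Lemma~\ref{lem:regularity} to $G$ with parameters $\eps,d$ to obtain clusters $V_1,\dots,V_k$ of size $\ell$, the pure subdigraph $G'$, and the reduced digraph $R$. As in the proof of Theorem~\ref{thm:weighted-extremal}, we weight each arc of $R$ by the $a$-weighted densities. We would use a \emph{multi-reduced} digraph $R$ that records a double arc between $V_i$ and $V_j$ only when both directions have density at least $d$. Then $e_a(R)\ell^2\ge e_a(G)-(2d+O(\eps))n^2$, so
\[
e_a(R)\ge a\,t_r(k)-(\gamma+3d)k^2 .
\]
By Lemma~\ref{lem:embedding}, $R$ is $T_{r+1}$-free: a copy of $T_{r+1}$ in $R$ would give $T_{r+1}^t\subseteq R^t$ and hence $T_{r+1}^t\subseteq G$. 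This is the same lifting step as in the proof of Theorem~\ref{thm:weighted-extremal}.

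\textbf{Stability for $R$.} Now invoke the stability theorem for $T_{r+1}$-free digraphs with weight $a\in(3/2,2]$ from \cite{kuhn2017structure}. For $a=2$ and $a=\log 3$ it is proved there, and the same interpolation remark cited above extends it to the whole interval. Since $\gamma+3d$ is small compared with $\beta'$, this gives a partition $R_1\cup\dots\cup R_r$ of $V(R)$ such that $R$ differs from the complete $r$-partite double digraph on these parts by at most $\beta' k^2$ arcs. The parts must be nearly balanced, since otherwise $e_a(R)$ would fall below $a\,t_r(k)-\beta'k^2$ by the strict concavity of $t_r$.

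\textbf{Pulling back to $G$.} Let $W_i=\bigcup_{V_j\in R_i}V_j$, and distribute $V_0$ arbitrarily. We count the arcs of $G$ that deviate from the complete $r$-partite double digraph on $(W_1,\dots,W_r)$. There are four sources:
\begin{itemize}
\item arcs inside clusters, arcs touching $V_0$, and arcs removed in passing to $G'$, at most $(d+2\eps)n^2$ in total;
\item arcs inside a part $W_i$ between clusters, at most $\beta' n^2$ since such pairs are arcs of $R$ inside $R_i$;
\item missing cross arcs, controlled by the missing arcs of $R$ (at most $\beta' n^2$) together with a weight-counting argument;
\item cross pairs that are only sparse or single-direction.
\end{itemize}
For the last two sources, we use that $e_a(G)\ge a\,t_r(n)-\gamma n^2$ while the internal contribution is small. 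Hence the cross pairs must carry weight almost $a|W_i||W_j|$. Since $a>1$, a pair carrying weight less than $a$ costs at least $\min(a-1,1)>0$. So all but $O((\gamma+\beta')n^2)$ cross pairs are double arcs. Finally, we rebalance the parts, moving $O(\sqrt{\beta'}n)$ vertices, to reach $\DT_r(n)$. The total number of changes is at most $\beta n^2$.

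\textbf{Main obstacle.} We expect the hardest step to be the reduced-digraph stability. We must confirm that the cited KOTZ stability holds for general $a\in(3/2,2]$ and in the weighted or multi-digraph form we need, where an arc can be single or double with fractional densities. If the citation does not cover this, we would prove it directly. The plan would be to use a Zykov-type symmetrisation: first show that the underlying graph of $R$ is nearly $K_{r+1}$-free in a weighted sense, using $a>3/2$ to force that a $T_{r+1}$-free tournament-like piece cannot beat the $r$-partite weight. Then apply Erd\H{o}s--Simonovits stability to the underlying graph.
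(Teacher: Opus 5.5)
Your proposal is correct, but it takes a different route from the paper's.

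\textbf{Your route.} You work with the plain reduced digraph $R$. Every directed cluster pair of $G'$ is either $\varepsilon$-regular of density at least $d$ or empty, so $e_a(G')\le e_a(R)\ell^2+a\varepsilon n^2$ holds with the unweighted $e_a(R)$. This lets you feed $R$ directly into the $t=1$ stability theorem for $T_{r+1}$-free digraphs from \cite{kuhn2017structure}. You then recover the fine structure in $G$ itself by one global count: you compare $e_a(G)$ with $a\sum_{i<j}|W_i||W_j|\le a\,t_r(n)$, where each cross pair that is not a double arc costs at least $a-1>1/2$. This single count shows both that almost all cross pairs are double arcs and that the parts are nearly balanced.

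\textbf{The paper's route.} The paper instead attaches fractional weights $w_{ij}=ad^2_{ij}+d^1_{ij}$ to the arcs of $R$. It proves its own weighted stability lemma (Lemma~\ref{lem:weighted-stability}), using a second application of regularity and a compactness argument that rests only on the extremal result. It then reads off near-completeness of cross pairs cluster by cluster from $w_{ij}\ge a-\eta$.

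\textbf{What each buys.} Your route is shorter and sidesteps the awkward normalisation of that detour. The paper's weights only lie in $[d,a]$, not the $[1,a]$ that Lemma~\ref{lem:weighted-stability} assumes. Moreover, $w_{ij}=w_{ji}$ records the contribution of the whole unordered cluster pair, so summing over ordered arcs counts each pair twice. The price is that you import the full $t=1$ stability theorem for general $a\in(3/2,2]$ as a black box. The paper leans on the same reference for the general-$a$ extremal statement anyway.

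\textbf{On your ``main obstacle''.} That worry is unnecessary. $R$ is an ordinary digraph, and the fractional densities are handled only afterwards, in $G$. So no weighted or multi-digraph version of the cited stability result is needed. You can simply drop the sentence about weighting the arcs of $R$.
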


The proof follows a well‑established pattern: we apply the directed regularity lemma to obtain a reduced digraph $R$, transfer the weighted extremal condition to $R$, use a weighted version of the stability theorem for $T_{r+1}$ (which forces $R$ to be close to $\DT_r(m)$), and finally lift this structure back to $G$.  The main difficulty is to control the weights, which is handled by a ``weighted stability lemma'' (Lemma~\ref{lem:weighted-stability}) for $T_{r+1}$‑free digraphs.  We begin with the statement of this auxiliary result.

\subsection{A weighted stability lemma for $T_{r+1}$}\label{subsec:weighted-stability}

For a digraph $R$ on $m$ vertices we denote by $e_a^*(R)$ the quantity
\[
e_a^*(R)=\sum_{ij\in E(R)}w_{ij},
\]
where $w_{ij}\in[1,a]$ is a weight assigned to the arc $ij$. In our application $w_{ij}$ will be the average of $a$ times the density of 2‑cycles plus the density of 1‑cycles over a regular pair. The following lemma is the analogue of the ordinary stability theorem for $T_{r+1}$ but with weights.

\begin{lemma}[Weighted stability lemma]\label{lem:weighted-stability}
For every $r\ge2$, $a\in(3/2,2]$ and $\eta>0$ there exist $\delta>0$ and $m_0$ such that for all $m\ge m_0$ the following holds.  Let $R$ be an $m$-vertex $T_{r+1}$-free digraph and assign to each arc $ij\in E(R)$ a weight $w_{ij}\in[1,a]$ in such a way that
\[
e_a^*(R)\ge a\,t_r(m)-\delta m^2.
\]
Then there exists a partition $U_1,\dots,U_r$ of the vertex set of $R$ with the following properties, up to at most $\eta m^2$ exceptions:
\begin{enumerate}[(i)]
\item if $i,j$ belong to the same class $U_p$, then $R$ contains no arc between $i$ and $j$;
\item if $i\in U_p$, $j\in U_q$ with $p\neq q$, then both arcs $ij$ and $ji$ are present in $R$ and their weights satisfy $w_{ij},w_{ji}\ge a-\eta$.
\end{enumerate}
\end{lemma}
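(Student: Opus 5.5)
The proof reduces the weighted problem to the undirected Erdős–Simonovits stability theorem. The tool is the standard KOTZ-type inequality for $T_{r+1}$-free digraphs. Let $R_2$ be the undirected graph on $V(R)$ whose edges are the pairs $\{i,j\}$ with both $ij,ji\in E(R)$ (double pairs). Let $R_1$ be the graph of pairs carrying exactly one arc. Since every weight lies in $[1,a]$,
\[
e_a^*(R)\le a\,e(R_2)+2(a-1)\cdot 0+\ldots \le a\,e(R_2)+e(R_1)+(a-1)\,\#\{\text{double pairs}\}.
\]
It is cleaner to write $e_a^*(R)\le \sum_{\{i,j\}} \mu_{ij}$, where $\mu_{ij}=w_{ij}+w_{ji}\le 2a$ on double pairs and $\mu_{ij}\le a$ on single pairs. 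Also introduce the slack $\sigma:=\sum_{\{i,j\}\in R_2}(2a-\mu_{ij})$, which measures how far the double-pair weights fall below their maximum.

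The key claim I would prove first is a weighted Turán inequality with a deficiency term:
\[
e_a^*(R)\le a\,t_r(m) - c\bigl(\sigma + \text{dist}\bigr) + o(m^2),
\]
where $c=c(a,r)>0$ and dist is the edit distance of $R_2$ (with single pairs counted as present) from a complete $r$-partite graph. The induction is the KOTZ one: removing a copy of $T_r$ in $R_2$-rich configurations, or equivalently using the fact that the double graph $R_2$ is $K_{r+1}$-free. The latter holds because any $K_{r+1}$ in $R_2$ contains a $T_{r+1}$ in $R$, as all arcs are present both ways. More generally, a $K_{r+1}$ in the underlying graph of $R_1\cup R_2$ becomes a $T_{r+1}$ as soon as its single-arc pairs are acyclically consistent. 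Following KOTZ, the bound on single pairs uses that a tournament on $r+1$ vertices, combined with double edges, always contains $T_{r+1}$ when enough double edges are present; this is where $a>3/2$ enters. A single arc contributes at most $a/2$ per ordered slot, compared with $a$ for a double arc, and the condition $a>3/2$ ensures that structures mixing single arcs (e.g. blow-ups of cyclic triangles) lose a constant fraction relative to $a\,t_r$. I would import this as the weighted extremal result of \cite{kuhn2017structure} (Lemma 4.1 there, extended to $(3/2,2]$ as remarked), together with its stability form: if $e_a^*(R)\ge a\,t_r(m)-\delta m^2$, then the number of single-arc pairs is at most $\eta' m^2$.

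With that in hand, the steps are as follows. \textbf{Step 1:} Show that single pairs number at most $\eta' m^2$, using the KOTZ stability input above. \textbf{Step 2:} Deduce $a\,e(R_2)\ge e_a^*(R)-a\eta' m^2-\sigma'$, hence $e(R_2)\ge t_r(m)-(\delta/a+\eta')m^2$. Since $R_2$ is $K_{r+1}$-free, the Erdős–Simonovits stability theorem gives a partition $U_1,\dots,U_r$ such that $R_2$ differs from the complete $r$-partite graph on it by at most $\eta'' m^2$ pairs. This handles (i) for double pairs; single pairs inside classes are already few by Step 1. \textbf{Step 3:} Handle the weights. Because $e(R_2)\le t_r(m)$ exactly (Turán), the total weight satisfies $\sum_{R_2}\mu_{ij}\ge 2a\,e(R_2)-(\delta+a\eta')m^2$ up to the factor convention, so the total deficiency is $\sigma\le O(\delta+\eta')m^2$. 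By Markov's inequality, at most $\sigma/\eta\le \eta m^2/2$ cross pairs have some weight below $a-\eta$, which gives (ii) after choosing $\delta,\eta'$ small relative to $\eta^2$.

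The main obstacle is Step 1, i.e. controlling single-arc pairs. Naively, a single arc carries weight up to $a$ while a double pair carries $2a$ against the normalisation $a\,t_r$ (one should check whether $e_a^*$ counts ordered arcs or pairs; I would normalise so that the extremal $\DT_r$ gives $a\,t_r$, meaning a double pair contributes $a$ and a single pair at most $1$ in pair-weight terms). With this normalisation, single pairs have pair-weight at most $1<a$. Hence any configuration in which the single pairs contribute, such as a $K_{r+1}$-rich underlying graph with tournaments avoiding $T_{r+1}$, must be beaten by Turán. Quantitatively, I would use the KOTZ weighted bound $a\,e(R_2)+e(R_1)\le a\,t_r(m)$, proved by induction on $r$ via a maximal double clique. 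Its stability version loses $(a-1)$ or $(2a-3)$ per single pair, the latter being positive precisely because $a>3/2$. This yields $e(R_1)\le \delta m^2/(2a-3)$, and I expect verifying this stability version to be the real work.
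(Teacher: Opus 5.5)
\textbf{Gap: Step 1 is never proved.} Your reduction is genuinely different from the paper's route, which argues by contradiction, regularises $R$, and uses uniqueness of $\DT_r(p)$ on a reduced digraph. Your Steps 2 and 3 are correct. Your renormalisation (a double pair worth at most $a$, a single pair at most $a/2\le 1$) is actually forced. With the literal arc-sum, an $r$-partite tournament with all weights $a$ meets the hypothesis with equality, yet violates (ii) on every cross pair.

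The gap is Step 1, the only place where the directed problem is really used: you never show $e(R_1)=o(m^2)$. You defer it to an unspecified ``stability form'' and to a per-pair loss $a\,e(R_2)+e(R_1)\le a\,t_r(m)-(2a-3)\,e(R_1)$ (or with $a-1$). Both versions are false. Take $r=2$, $a=2$ and the balanced blow-up of the cyclic triangle. It is $T_3$-free with $e(R_2)=0$ and $e(R_1)=m^2/3$, so the left side plus the claimed loss is $2m^2/3>m^2/2=a\,t_2(m)$. Equivalently, its deficit is $\delta=1/6$, while $e(R_1)=m^2/3>\delta m^2/(2a-3)$.

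\textbf{How to close it.} Step 1 is short once you apply the weighted Tur\'an theorem at a \emph{smaller} weight. The paper itself assumes that theorem on all of $(3/2,2]$. Put $f_i=e(R_i)$ and $D=t_r(m)-f_2$; then $D\ge 0$ by Tur\'an, since $R_2$ is $K_{r+1}$-free.
\begin{enumerate}
\item The hypothesis gives $f_1\ge aD-\delta m^2$.
\item For fixed $a'\in(3/2,a)$, the bound $a'f_2+f_1\le a'\,t_r(m)+o(m^2)$ gives $f_1\le a'D+o(m^2)$.
\item Combining the two,
\[
D\le\frac{\delta m^2+o(m^2)}{a-a'},\qquad f_1\le\frac{a'\,(\delta m^2+o(m^2))}{a-a'}.
\]
\end{enumerate}
For $a'$ close to $3/2$, the bound on $f_1$ is about $3\delta m^2/(2a-3)$. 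This is exactly where $a>3/2$ strictly is needed; your bound is missing this factor of about $3$.

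With this, your Steps 2 and 3 finish the proof, with $\delta$ chosen small in terms of $\eta$ and $a-3/2$. Compared with the paper, the patched route needs no regularity lemma and is quantitative. It also avoids the paper's compactness step. There the cluster number $p\le M(\eps)$ is not uniformly bounded once $\eps$ has to shrink along the sequence of counterexamples, so the ``finitely many digraphs on $p$ vertices'' gap argument is not justified.
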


\subsubsection{Proof of Lemma~\ref{lem:weighted-stability}}

We argue by contradiction.  Suppose the statement is false.  Then there exist constants $\eta_0>0$ and a sequence of counterexamples: for every $k$ we can find $m_k\to\infty$, a $T_{r+1}$-free digraph $R_k$ on $m_k$ vertices, and weights $w_{ij}^{(k)}\in[1,a]$ (defined only for arcs $ij\in E(R_k)$; non‑arcs have no weight) such that
\[
e_a^*(R_k)\ge a\,t_r(m_k)-\frac{1}{k}\,m_k^2 \qquad\text{(i.e. }\delta_k=1/k\to0\text{)},
\]
but $R_k$ does **not** admit a partition $U_1,\dots,U_r$ with the required properties for $\eta_0$ (i.e., for every $r$-partition there are at least $\eta_0 m_k^2$ violating pairs). We will derive a contradiction.

\textbf{Step 1: Regularisation of $R_k$.}
Apply the directed regularity lemma (Lemma~\ref{lem:regularity}) to $R_k$ with parameters $\varepsilon$ and $d$ that will be chosen later (very small compared to $\eta_0$). We obtain a partition $V_0^{(k)},V_1^{(k)},\dots,V_{p_k}^{(k)}$ of $V(R_k)$ and a pure digraph $R_k'\subseteq R_k$ with the usual properties: $|V_0^{(k)}|\le\varepsilon m_k$, $|V_1^{(k)}|=\dots=|V_{p_k}^{(k)}|=\ell_k$, $p_k\ge M'$, and for every $i\neq j$, the pair $(V_i^{(k)},V_j^{(k)})_{R_k'}$ is either $\varepsilon$-regular with density at least $d$ or has density $0$. Denote $p=p_k$ and let $\widetilde{R}_k$ be the reduced digraph on $\{1,\dots,p\}$ (arcs correspond to regular pairs of density $\ge d$).

\textbf{Step 2: Transferring weights to $\widetilde{R}_k$.}
For each ordered pair $(i,j)$ that is an arc of $\widetilde{R}_k$, define
\[
\widetilde{w}_{ij}^{(k)} = \frac{1}{|V_i^{(k)}||V_j^{(k)}|}\sum_{u\in V_i^{(k)},v\in V_j^{(k)}} w_{uv}^{(k)},
\]
where $w_{uv}^{(k)}$ is defined only if $uv\in E(R_k)$; for pairs $(u,v)$ that are not arcs we simply do not sum them. Since we only sum over arcs, we have $\widetilde{w}_{ij}^{(k)}\in[1,a]$ because each $w_{uv}^{(k)}\in[1,a]$ and we are averaging over all pairs in the regular pair (including non‑arcs, which are not counted). To handle the contribution of non‑arcs we use the fact that in a regular pair the number of missing arcs is at most $(\varepsilon+d)|V_i||V_j|$, and the weight of each such missing arc would be at most $a$ if we had defined it; however we simply ignore them, and the error can be bounded by a term $O(\varepsilon+d)$ times the total number of pairs. A standard calculation (see e.g. \cite[Lemma 9.2]{saxton2015hypergraph}) gives
\[
e_a^*(\widetilde{R}_k) \ge a\,t_r(p) - \delta_k' p^2,
\]
where $\delta_k'\to0$ as $k\to\infty$ (provided $\varepsilon$ and $d$ are chosen small enough relative to $\delta_k$). In particular, for large $k$ we have $\delta_k'\le \delta_0$ for any prescribed $\delta_0>0$.

\textbf{Step 3: Structure of $\widetilde{R}_k$.}
Because $R_k$ is $T_{r+1}$-free, the reduced digraph $\widetilde{R}_k$ is also $T_{r+1}$-free (otherwise an embedding argument would produce a $T_{r+1}$ in $R_k$). Moreover, the number $p$ of clusters is bounded by some absolute constant $M$ depending only on $\varepsilon$ and the initial parameters; indeed, the regularity lemma guarantees $p\le M$.  Thus $p$ is bounded independently of $k$. For each fixed $p$, there are only finitely many $T_{r+1}$-free digraphs on $p$ vertices. The weighted extremal number $\ex_a(p,T_{r+1})$ equals $a t_r(p)$ and is uniquely attained by $\DT_r(p)$ (by Lemma 4.1 in \cite{kuhn2017structure}, which holds for all $p$). Consequently, if $\delta_k'$ is smaller than the minimum positive difference between $a t_r(p)$ and the weighted size of any non‑extremal $T_{r+1}$-free digraph on $p$ vertices, then $e_a^*(\widetilde{R}_k)\ge a t_r(p)-\delta_k' p^2$ forces $\widetilde{R}_k\cong\DT_r(p)$. Since $\delta_k'\to0$, for sufficiently large $k$ this condition is satisfied, and we obtain $\widetilde{R}_k\cong\DT_r(p)$. Let the parts of this $\DT_r(p)$ be $W_1,\dots,W_r$ (each $W_i$ is a set of cluster indices).

\textbf{Step 4: Lifting to $R_k$.}
Now define a partition $U_1,\dots,U_r$ of $V(R_k)$ by putting every vertex belonging to a cluster with index in $W_i$ into $U_i$, and distributing the vertices of the exceptional set $V_0^{(k)}$ arbitrarily (e.g., equally). We claim that this partition satisfies the required properties with at most $\eta_0 m_k^2$ exceptions, contradicting the choice of $R_k$.

Consider any two vertices $x, y$ not both in $V_0^{(k)}$. If they lie in different clusters $V_i, V_j$ with $i \in W_p, j \in W_q$ and $p \neq q$, then because $\widetilde{R}_k$ has both arcs $ij$ and $ji$, the pair $(V_i, V_j)$ is $\varepsilon$-regular with density at least $d$. Moreover, from $e_a^*(\widetilde{R}_k) = a t_r(p)$ and the uniqueness of the extremal digraph, we actually have $\widetilde{w}_{ij} = a$ and $\widetilde{w}_{ji} = a$ for every cross arc of $\widetilde{R}_k$. We now formalise the distribution of the original weights $w_{uv}$ using Markov's inequality. Define the defect of a pair $(u,v)$ as $X_{uv} = a - w_{uv} \ge 0$. Since $e_a^*(R_k) \ge a\,t_r(m_k) - \delta_k m_k^2$, the total defect over all pairs is bounded by $\sum X_{uv} \le \delta_k m_k^2 + O(\varepsilon m_k^2)$. Let $E_{bad}$ be the set of cross-pairs with $w_{uv} < a - \eta_0$, meaning $X_{uv} > \eta_0$. By Markov's inequality, $\eta_0 |E_{bad}| \le \sum_{(u,v) \in E_{bad}} X_{uv} \le (\delta_k + O(\varepsilon)) m_k^2$. Because $\delta_k \to 0$ as $k \to \infty$ and $\varepsilon$ can be chosen arbitrarily small, we strictly obtain $|E_{bad}| = o(m_k^2)$. Thus, for all but $o(m_k^2)$ pairs $u \in V_i, v \in V_j$, the original weight satisfies $w_{uv} \ge a - \eta_0$.

Now we bound the number of violating pairs:
\begin{itemize}
    \item Pairs inside $V_0^{(k)}$: at most $|V_0^{(k)}|^2\le \varepsilon^2 m_k^2$.
    \item Pairs with one vertex in $V_0^{(k)}$ and the other outside: at most $2\varepsilon m_k^2$.
    \item Pairs inside the same cluster $V_i$: in $R_k'$ there are no such arcs; in $R_k$ the total number of arcs inside clusters is bounded by the degree loss, at most $(d+\varepsilon)m_k^2/2$.
    \item Pairs coming from clusters that are in the same part $W_p$ but different clusters: in $\widetilde{R}_k$ there are no arcs between them; in $R_k$ the arcs between such clusters are again bounded by the degree loss, at most $(d+\varepsilon)m_k^2$.
\end{itemize}
Thus the total number of arcs violating the ideal structure is at most $(\varepsilon^2+2\varepsilon+(d+\varepsilon)/2+(d+\varepsilon))m_k^2 + o(m_k^2)$, which can be made smaller than $\eta_0 m_k^2$ by choosing $\varepsilon,d$ sufficiently small. This contradicts the assumption that $R_k$ was a counterexample, completing the proof of Lemma~\ref{lem:weighted-stability}.

\subsection{Regularity setup for Theorem~\ref{thm:stability}}
We now start the proof of Theorem~\ref{thm:stability}. Fix $r,t$ and $a$ as in the theorem, and let $\beta>0$ be given. We will choose a chain of constants
\[
\frac1{n_0}\;\ll\;\varepsilon\;\ll\;d\;\ll\;\eta\;\ll\;\beta,\;\frac1r,
\]
where $\eta$ will be the parameter appearing in Lemma~\ref{lem:weighted-stability}. The exact dependencies will become clear during the proof.

Set $s=t(r+1)$ and let $\Delta=\Delta(T_{r+1}^t)$. By the embedding lemma (Lemma~\ref{lem:embedding}) there exists $\varepsilon_0=\varepsilon_0(d,\Delta)$ such that if $\varepsilon\le\varepsilon_0$ and the cluster size $\ell\ge s/\varepsilon_0$, then any blow‑up of a subdigraph of the reduced digraph can be embedded. We choose $\varepsilon\le\min\{\varepsilon_0,d\}$.

Apply the directed regularity lemma (Lemma~\ref{lem:regularity}) to $G$ with parameters $\varepsilon,d$. We obtain a partition $V_0,V_1,\dots,V_k$ of $V(G)$ and a pure digraph $G'\subseteq G$ with the usual properties: $|V_0|\le\varepsilon n$, $|V_1|=\dots=|V_k|=\ell$, $k\ge M'$ (some absolute constant) and every pair $(V_i,V_j)_{G'}$ is either $\varepsilon$-regular with density at least $d$ or has density $0$. Denote $m=k$ and set $R$ to be the reduced digraph on vertex set $\{1,\dots,m\}$ where $ij$ is an arc iff $(V_i,V_j)_{G'}$ is $\varepsilon$-regular with density $\ge d$.

For each such regular pair we define
\[
d_{ij}^2=\frac{\#\{\text{ordered pairs }(u,v)\in V_i\times V_j\text{ with both }uv,vu\in G'\}}{|V_i||V_j|},
\]
\[
d_{ij}^1=\frac{\#\{\text{ordered pairs }(u,v)\in V_i\times V_j\text{ with exactly one of }uv,vu\in G'\}}{|V_i||V_j|},
\]
so that $d_{ij}^1+d_{ij}^2\ge d$. The weighted contribution of this pair to $e_a(G')$ is $(a d_{ij}^2+d_{ij}^1)\ell^2$. We define a weight on the arc $ij$ of $R$ by
\[
w_{ij}=a d_{ij}^2+d_{ij}^1.
\]
Clearly $w_{ij}\in[d,a]$. Moreover, for an ordered pair $(i,j)$ that is not an arc of $R$ we do not assign any weight (it will play no role). The weighted size of $R$ (with these weights) is
\[
e_a^*(R)=\sum_{ij\in E(R)}w_{ij}.
\]

\subsection{From $G$ to $R$}
We now relate $e_a(G)$ to $e_a^*(R)$. From the regularity lemma we have
\[
e_a(G)\le e_a(G')+(d+\varepsilon)n^2.
\]
Inside $G'$ the only possible arcs are between different clusters, and those that belong to non‑regular or low‑density pairs contribute nothing to $e_a(G')$ by definition. Hence
\[
e_a(G')\le a|V_0|n+\sum_{ij\in E(R)}w_{ij}\ell^2\le a\varepsilon n^2+e_a^*(R)\ell^2.
\]
Combining these inequalities yields
\[
e_a(G)\le e_a^*(R)\ell^2+(a\varepsilon+d+\varepsilon)n^2. \tag{1}
\]

On the other hand the hypothesis of Theorem~\ref{thm:stability} gives
\[
e_a(G)\ge a\,t_r(n)-\gamma n^2.
\]
Using the well‑known estimate $t_r(n)=\bigl(1-\frac1r\bigr)\frac{n^2}{2}+O(n)$ and the fact that $\ell\ge(1-\varepsilon)n/m$, we obtain from (1)
\[
e_a^*(R)\frac{(1-\varepsilon)^2n^2}{m^2}\ge a\Bigl(1-\frac1r\Bigr)\frac{n^2}{2}-\gamma n^2-(a\varepsilon+d+\varepsilon)n^2-O(n).
\]
After multiplying by $m^2/n^2$ and setting
\[
\delta:=\gamma+a\varepsilon+d+\varepsilon+o(1),
\]
we get
\[
e_a^*(R)\ge a\Bigl(1-\frac1r\Bigr)\frac{m^2}{2}-\delta m^2. \tag{2}
\]

\subsection{$R$ is $T_{r+1}$-free}
Suppose for a contradiction that $R$ contains a copy of $T_{r+1}$. Then, by the embedding lemma (Lemma~\ref{lem:embedding}) with $s=t(r+1)$, we can embed $T_{r+1}^t$ into $G'$, because each regular pair has density at least $d$ and the cluster size $\ell$ is at least $s/\varepsilon_0$ (since $n$ is large enough). This would contradict the fact that $G$ (and hence $G'$) is $T_{r+1}^t$-free.  Therefore $R$ is $T_{r+1}$-free.

\subsection{Applying the weighted stability lemma to $R$}
Now we apply Lemma~\ref{lem:weighted-stability} to $R$ with the weights $w_{ij}$ defined above. By (2), if we choose $\gamma$ (and consequently $\delta$) sufficiently small, the hypothesis of the lemma is satisfied with $\eta$ (which we have not yet fixed; we will choose it later). Consequently there exists a partition $U_1,\dots,U_r$ of $[m]$ such that, with at most $\eta m^2$ exceptional pairs, we have:
\begin{itemize}
\item no arcs inside the same $U_p$;
\item for $p\neq q$ and $i\in U_p$, $j\in U_q$, both arcs $ij$ and $ji$ belong to $E(R)$ and $w_{ij},w_{ji}\ge a-\eta$.
\end{itemize}

\subsection{Lifting the partition to $G$}
Using this partition of the clusters we now define a partition $X_1,\dots,X_r$ of $V(G)$: put all vertices of $V_i$ into $X_p$ exactly when $i\in U_p$. The exceptional vertices in $V_0$ can be distributed arbitrarily (e.g. equally among the $X_p$). We will show that $G$ differs from $\DT_r(n)$ by at most $\beta n^2$ arcs; here $\DT_r(n)$ means the complete $r$-partite digraph in which every cross pair contains both directions and there are no arcs inside parts. We need to bound the number of arcs that violate this ideal structure.  They can be classified as follows.

\begin{enumerate}
\item \textbf{Arcs incident to $V_0$.}  Since $|V_0|\le\varepsilon n$, there are at most $2\varepsilon n^2$ such arcs.
\item \textbf{Arcs coming from pairs that are not regular or have low density.}  In $G'$ these pairs contribute nothing; in the original $G$ they can have at most $2(d+\varepsilon)n^2$ arcs, because every vertex loses at most $(d+\varepsilon)n$ neighbours in each direction when passing from $G$ to $G'$.
\item \textbf{Arcs corresponding to exceptional cluster pairs.}  By the conclusion of Lemma~\ref{lem:weighted-stability}, there are at most $\eta m^2$ unordered pairs $\{i,j\}$ that violate either the ``no arc inside a part'' condition or the ``full bidirectional arcs with large weight'' condition. Each such pair involves at most $2\ell^2$ arcs (both directions). Hence the total number of arcs in this category is at most $2\eta m^2\ell^2\approx 2\eta n^2$.
\item \textbf{Arcs in ``good'' pairs that are not yet complete bidirectional.}  Consider a pair $(i,j)$ with $i\in U_p$, $j\in U_q$, $p\neq q$, for which both arcs exist and $w_{ij},w_{ji}\ge a-\eta$. What does $w_{ij}\ge a-\eta$ imply about the actual densities $d_{ij}^2,d_{ij}^1$? Since $w_{ij}=a d_{ij}^2+d_{ij}^1$ and $d_{ij}^1= d_{ij}-d_{ij}^2\le 1-d_{ij}^2$, we have
\[
a d_{ij}^2+1-d_{ij}^2\ge a-\eta\quad\Longrightarrow\quad (a-1)d_{ij}^2\ge a-\eta-1.
\]
Because $a>3/2$, $a-1>1/2$, we obtain $d_{ij}^2\ge 1-\frac{\eta}{a-1}$. Thus the density of 2‑cycles in this regular pair is at least $1-\frac{\eta}{a-1}$. To turn this pair into a complete bidirectional pair we may need to add at most $2\frac{\eta}{a-1}\ell^2$ arcs (both directions). The number of such good pairs is at most $\binom{m}{2}\approx\frac{m^2}{2}$, and each contributes at most that many changes. Hence the total number of modifications needed in this class is at most $\frac{r-1}{r}\cdot\frac{\eta}{a-1}n^2$ (the factor $\frac{r-1}{r}$ accounts for the proportion of cross pairs).
\end{enumerate}

Summing these estimates, the total number of arcs that have to be changed is at most
\[
\Bigl(2\varepsilon+2(d+\varepsilon)+2\eta+\frac{r-1}{r}\cdot\frac{\eta}{a-1}\Bigr)n^2+o(n^2).
\]

\subsection{Choice of constants}
Now we choose the constants in the following order:
\begin{itemize}
\item Fix $\beta>0$ as given.
\item Choose $\eta>0$ so small that $2\eta+\frac{r-1}{r}\cdot\frac{\eta}{a-1}<\frac{\beta}{10}$.
\item Pick $d=\frac{\beta}{20}$ and then $\varepsilon\le\min\{\varepsilon_0,\frac{\beta}{20}\}$.
\item Finally choose $\gamma>0$ (in Theorem~\ref{thm:stability}) sufficiently small so that the $\delta$ in (2) is smaller than the $\delta$ required by Lemma~\ref{lem:weighted-stability} for the chosen $\eta$; this is possible because $\delta$ tends to $0$ as $\gamma,\varepsilon,d\to0$.
\end{itemize}
With these choices the total number of changes is less than $\beta n^2$ for all sufficiently large $n$. This completes the proof of Theorem~\ref{thm:stability}.

\section{Typical structure of $T_{r+1}^t$-free oriented graphs and digraphs}

We now combine the container theorem (Theorem~\ref{thm:container}) with the stability result to prove the main theorem.

\begin{lemma}[Rough structure lemma]\label{lem:rough}
Let $r\ge2$, $t\ge1$ be integers and let $a\in(\frac32,2]$. For every $\alpha>0$ there exists $\varepsilon>0$ such that for all sufficiently large $n$,\\
(i)  all but at most $f(n,T_{r+1}^t)2^{-\varepsilon n^{2}}$ $T_{r+1}^t$-free oriented graphs on $n$ vertices can be made $r$-partite by changing at most $\alpha n^{2}$ arcs;\\
(ii)  all but at most $f^{*}(n,T_{r+1}^t)2^{-\varepsilon n^{2}}$ $T_{r+1}^t$-free digraphs on $n$ vertices can be made $r$-partite by changing at most $\alpha n^{2}$ arcs.

Consequently, almost all $T_{r+1}^t$-free oriented graphs and almost all $T_{r+1}^t$-free digraphs are $r$-partite.

Here $f(n,T_{r+1}^t)$ and $f^{*}(n,T_{r+1}^t)$ denote the numbers of labelled $T_{r+1}^t$-free oriented graphs and digraphs on $n$ vertices, respectively.
\end{lemma}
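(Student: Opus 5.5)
We follow the standard container-plus-stability argument of Kühn, Osthus, Townsend and Zhao, with $H=T_{r+1}^t$. We treat (i) and (ii) together by working with $e_a$, taking $a=\log 3$ for oriented graphs and $a=2$ for digraphs. The key observation is that the number of $H$-free oriented graphs (resp.\ digraphs) contained in a fixed digraph $G$ is exactly $3^{f_2(G)}2^{f_1(G)}=2^{e_a(G)}$. For oriented graphs this holds because a double pair can be left empty or oriented in either direction; for digraphs it holds because each arc is independently present or absent. Hence every subgraph count we need is controlled by $e_a$.

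\textbf{Step 1 (containers).} Given $\alpha$, choose $\beta\ll\alpha$ and let $\gamma$ be given by Theorem~\ref{thm:stability} for this $\beta$. Set $\eps_1\ll\gamma$ and apply Theorem~\ref{thm:container} with $\eps_1$ to obtain a family $\cC$ with $\log|\cC|\le cn^{2-1/m(H)}\log n=o(n^2)$; here $m(H)\ge 1$ is finite since $e(H)\ge 2$. Every container $G\in\cC$ satisfies $e_a(G)\le \ex_a(n,H)+\eps_1n^2\le a\,t_r(n)+2\eps_1 n^2$, using Corollary~\ref{cor:asymptotic}.

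\textbf{Step 2 (classifying containers).} Call $G\in\cC$ \emph{good} if it is within $\beta n^2$ arcs of some copy of $\DT_r(n)$, and \emph{bad} otherwise. For a bad $G$, we apply Lemma~\ref{lem:removal} with a parameter $\gamma'\ll\gamma$; this is possible after taking $\eps_1$ small relative to the $\eps'(\gamma')$ of that lemma. Deleting at most $\gamma' n^2$ arcs yields an $H$-free digraph $G^*$ which is still at distance at least $(\beta-\gamma')n^2\ge \beta n^2/2$ from every $\DT_r(n)$. We apply Theorem~\ref{thm:stability} with $\beta/2$ in place of $\beta$, and $\gamma$ chosen accordingly. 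It gives $e_a(G^*)<a\,t_r(n)-\gamma n^2$, and therefore $e_a(G)\le a\,t_r(n)-\gamma n^2+a\gamma' n^2\le a\,t_r(n)-\gamma n^2/2$.

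\textbf{Step 3 (counting).} By Step 1, every $H$-free graph lies in some container. The number of $H$-free graphs lying in bad containers is at most
\[
|\cC|\,2^{a t_r(n)-\gamma n^2/2}\le 2^{a t_r(n)-\gamma n^2/4}.
\]
On the other hand, $f(n,H)\ge 2^{a t_r(n)}$, since every sub-oriented-graph (resp.\ subdigraph) of $\DT_r(n)$ is $r$-partite and hence $H$-free. So bad containers contribute at most $f(n,H)2^{-\eps n^2}$ graphs, with $\eps=\gamma/4$. Any $H$-free graph $I$ contained in a good container $G$ satisfies $I\subseteq G$, and $G$ is within $\beta n^2$ arcs of a $\DT_r(n)$ with parts $X_1,\dots,X_r$. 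The arcs of $I$ inside the parts therefore number at most $\beta n^2\le\alpha n^2$, and deleting them makes $I$ $r$-partite. This proves (i) and (ii). The final ``consequently'' sentence follows from these bounds; since the paper states it in the rough sense of being $r$-partite up to $o(n^2)$ changes, we would prove it in that form, with the exact structure deferred to the later refinement.

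\textbf{Main obstacle.} The only delicate point is Step 2. The container property (b) gives only ``few copies of $H$'', not $H$-freeness, so Theorem~\ref{thm:stability} cannot be applied to $G$ directly. Removal must therefore be interposed, and the constants have to be arranged so that the removal loss $\gamma'$ is small relative to both $\beta$ and $\gamma$. The hierarchy to fix is $\eps_1\ll\gamma'\ll\gamma\ll\beta\ll\alpha$. One also has to check that the edge count $2^{e_a(G)}$ is correct in the oriented case, where a double pair gives a factor $3=2^{\log 3}$.
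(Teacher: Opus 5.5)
Your proof is correct and uses the same argument as the paper. The paper splits containers by weight: it shows that those with $e_a(G)\ge \ex_a(n,T_{r+1}^t)-\eps' n^2$ are close to $\DT_r(n)$ via removal plus Theorem~\ref{thm:stability}. You instead split by distance to $\DT_r(n)$ and use the contrapositive of the same removal-plus-stability step to show that far containers are light, with the same $2^{e_a(G)}$ counting. One small slip: $3^{f_2(G)}2^{f_1(G)}=2^{e_a(G)}$ counts all oriented subgraphs of $G$, so it is only an upper bound on the $H$-free ones, not an exact count; the upper bound is all you use.
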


Recall that Lemma \ref{lem:rough} already gives a rough structural description; we now show how to prove it using the container method and stability, and then upgrade it to the exact structure.

\subsection{Proof of the rough structure lemma}
We prove part (i) of Lemma~\ref{lem:rough}; part (ii) is analogous with $a=2$.  Let $a=\log3$. Choose constants $1/n_0\ll\eps\ll\gamma\ll\beta\ll\alpha,1/r$, and set $\eps'=2\eps$.  Apply Theorem~\ref{thm:container} to $H=T_{r+1}^t$ with parameters $N=n$, $\eps'$, obtaining a container family $\cC$. Let $\cC_1=\{G\in\cC:e_a(G)\ge\ex_a(n,T_{r+1}^t)-\eps'n^2\}$.  By the container theorem, $|\cC|\le2^{\eps n^2}$, so the number of $T_{r+1}^t$-free oriented graphs not contained in any $G\in\cC_1$ is at most $|\cC|2^{\ex_a(n,T_{r+1}^t)-\eps'n^2}\le f(n,T_{r+1}^t)2^{-\eps n^2}$.

Now take any $G\in\cC_1$.  By property (b) of containers, $G$ contains at most $\eps'n^{(r+1)t}$ copies of $T_{r+1}^t$. Apply the removal lemma (Lemma~\ref{lem:removal}) to delete at most $\gamma n^2$ arcs and obtain a $T_{r+1}^t$-free digraph $G'$ with $e_a(G')\ge\ex_a(n,T_{r+1}^t)-(\eps'+\gamma)n^2$. By the stability theorem (Theorem~\ref{thm:stability}) with $\beta$, we have $G'=\DT_r(n)\pm\beta n^2$. Hence $G$ itself differs from $\DT_r(n)$ by at most $(\beta+\gamma)n^2\le\alpha n^2$ arcs.  This completes the proof of Lemma~\ref{lem:rough}.

\subsection{Exact structure: almost all graphs have $T_2^t$-free parts}

In this subsection, we upgrade the approximate structural result of Lemma 1.2 to the exact structural characterization. Unlike the $t=1$ case in \cite{kuhn2017structure}, which requires a delicate vertex-by-vertex induction, the $t \ge 2$ case can be resolved elegantly via a relative counting argument inside each container. Let $\mathcal{P}_{n, r, t}$ be the family of oriented graphs on $[n]$ admitting an $r$-partition where each part induces a $T_2^t$-free graph. Any graph in $\mathcal{P}_{n, r, t}$ with all forward cross-edges is strictly $T_{r+1}^t$-free (as any mapping of $T_{r+1}^t$ into such a graph must place at least one $T_2^t$ entirely within some part, which is forbidden). Thus, the number of $T_{r+1}^t$-free graphs is at least $|\mathcal{P}_{n, r, t}| \ge 3^{t_r(n)} 2^{\text{ex}(n/r, T_2^t)}$.

By the container method and the removal lemma (as shown in Section 5.1), all but a negligible fraction of $T_{r+1}^t$-free graphs belong to a small family of containers $\mathcal{C}_1$. Every container $C \in \mathcal{C}_1$ admits an $r$-partition $V_1, \dots, V_r$ such that $C$ is missing at most $\alpha n^2$ forward cross-edges and contains at most $\alpha n^2$ internal edges. Let $\mathcal{F}(C)$ be the family of $T_{r+1}^t$-free oriented graphs contained in $C$. We partition $\mathcal{F}(C)$ into two sets: the ``good'' graphs $\mathcal{G}(C)$, which contain no $T_2^t$ within any part, and the ``bad'' graphs $\mathcal{B}(C)$, which contain at least one $T_2^t$ within some part. We will show that $|\mathcal{B}(C)| \le e^{-c n} |\mathcal{F}(C)|$ for some constant $c > 0$.

For any $G \in \mathcal{F}(C)$, we can uniquely specify $G$ by choosing its internal edges $G_{int} \subset C$ and its cross-edges $G_{cross} \subset C$. Let $N(G_{int})$ be the number of valid choices for $G_{cross}$ such that $G_{int} \cup G_{cross}$ is $T_{r+1}^t$-free. Thus, $|\mathcal{B}(C)| = \sum_{G_{int} \text{ bad}} N(G_{int})$.

Suppose $G_{int}$ is a bad internal configuration. We identify the lexicographically first copy of $T_2^t$ in $G_{int}$, say on a vertex set $X \subset V_i$ with $|X| = 2t$. Because $C$ is missing at most $\alpha n^2$ cross-edges, a standard supersaturation argument \cite{erdos1983supersaturation} shows that the number of canonical copies of $T_{r-1}^t$ in the other $r-1$ parts of $C$ is $\Omega(n^{(r-1)t})$. Since each $T_{r-1}^t$ occupies exactly $t(r-1)$ vertices, we can greedily extract a collection of $M = \delta n$ vertex-disjoint copies of $T_{r-1}^t$, say $Y_1, \dots, Y_M$, for some constant $\delta > 0$.

For $G_{int} \cup G_{cross}$ to be $T_{r+1}^t$-free, $G_{cross}$ must not form a $T_{r+1}^t$ with the $T_2^t$ on $X$. Specifically, for each disjoint copy $Y_j$, the cross-edges between $X$ and $Y_j$ cannot all be oriented perfectly forward. There are $c = 2t^2(r-1)$ pairs of vertices between $X$ and any $Y_j$. Out of the $3^c$ possible edge configurations for these pairs, at least one (the all-forward configuration) is forbidden. Thus, there are at most $3^c - 1$ valid configurations between $X$ and $Y_j$.

Define a mapping $\Phi$ from the bad internal configurations to all internal configurations by deleting all edges within $X$. Let $G'_{int} = \Phi(G_{int})$. Clearly, $G'_{int}$ has no internal edges on $X$. The number of preimages of any $G'_{int}$ is at most $\binom{n}{2t} 3^{\binom{2t}{2}} \le n^{2t}$. Crucially, any valid cross-edge configuration for $G_{int}$ is also valid for $G'_{int}$.

Since the copies $Y_1, \dots, Y_M$ are vertex-disjoint, the choices of cross-edges between $X$ and each $Y_j$ are independent. Let $m$ be the maximum total number of cross-edges from $X$ to the other parts. The number of valid choices for $G_{cross}$ incident to $X$ in $G_{int}$ is therefore restricted by the forbidden subgraphs and is bounded by:
$$(3^c - 1)^{\delta n} \cdot 3^{m - c \delta n} = 3^m \left( 1 - 3^{-c} \right)^{\delta n}.$$
By setting $\gamma = -\delta \ln(1 - 3^{-c}) > 0$, this number is strictly bounded by $3^m e^{-\gamma n}$. In contrast, $G'_{int}$ places no such restriction on $X$, allowing all $3^m$ orientations up to the capacity of $C$. This rigorously establishes the strict relative bound $N(G_{int}) \le e^{-\gamma n} N(G'_{int})$.

Summing over all bad configurations gives:
$$|\mathcal{B}(C)| = \sum_{G_{int} \text{ bad}} N(G_{int}) \le \sum_{G'_{int}} n^{2t} e^{-\gamma n} N(G'_{int}) \le n^{2t} e^{-\gamma n} |\mathcal{F}(C)|.$$
For sufficiently large $n$, we have $n^{2t} e^{-\gamma n} = o(1)$, meaning almost no graphs in $C$ contain a $T_2^t$ inside any part. Summing over the small family of containers $C \in \mathcal{C}_1$, we conclude that almost all $T_{r+1}^t$-free graphs belong to $\mathcal{G}(C)$ for some $C$, which precisely means they admit an $r$-partition where each part is $T_2^t$-free. This completely establishes the exact typical structure, finishing the proof of Theorem 1.1 and Theorem 1.2. The identical argument applies to digraphs by substituting $3$ with $4$ for the cross-edge orientation choices.

\section{Concluding remarks and open problems}

We have shown that for any $r\ge2$, $t\ge1$, and any weight parameter $a\in(3/2,2]$, almost all $T_{r+1}^t$-free oriented graphs and digraphs are $r$-partite. This generalises the KOTZ theorem and confirms a conjecture of Liang and Liu \cite{liang2022typical}. Our proof relies on a weighted extremal theorem, a stability result, and the container method. The restriction $a>3/2$ is technical and comes from certain inequalities in the stability proof; it would be interesting to extend the result to all $a\ge1$. 

Another natural direction is to consider blow-ups of other tournaments or cycles. The methods developed here should apply as long as the forbidden digraph has a suitable extremal structure (e.g., a complete $r$-partite digraph is extremal). For cycles, the situation is more complex and already studied in \cite{kuhn2017structure}.

\end{document}